\newtheorem*{theorem-a}{Theorem A}
\theoremstyle{plain}
\newtheorem{Theorem}{Theorem}
\newtheorem{Proposition}{Proposition}
\newtheorem{Lemma}{Lemma}
\newtheorem{Corollary}{Corollary}
\theoremstyle{remark}
\newtheorem{Remark}{Remark}
\theoremstyle{definition}
\newtheorem{Definition}{Definition}
\DeclareMathOperator{\re}{Re}
\DeclareMathOperator{\id}{id}
\DeclareMathOperator{\im}{Im}
\DeclareMathOperator{\Aff}{\mathcal{A}ff}
\DeclareMathOperator{\sgn}{sgn}
\begin{document}

\title[]{A Maximum Modulus Theorem for functions admitting {S}tokes phenomena, and specific cases of {D}ulac's Theorem
}

\author[J. Palma-M\'arquez]{Jes\'us Palma-M\'arquez}
\address{Weizmann Institute of Science\\
Rehovot\\
Israel
}
\email[]{jesus.palma@weizmann.ac.il}

\author[M. Yeung]{Melvin Yeung}
\address{Hasselt University\\
Campus Diepenbeek, Agoralaan Gebouw D\\
3590 Diepenbeek\\
Belgium
}
\email{melvin.yeung@uhasselt.be}

\subjclass[2020]{30C80, 34C05, 34C07, 40A30}
\keywords{Cauchy--Heine transform, {D}ulac's problem, Limit cycles,  {P}hragm\'en--{L}indel\"{o}f principle, {S}tokes phenomenon.}
\thanks{The research of J. Palma-M\'arquez was partially supported by Papiit Dgapa UNAM IN103123, by the Israel Science Foundation (Grant No. 1347/23) and by funding received from the MINERVA Stiftung with the funds from the BMBF of the Federal Republic of Germany. This project has received funding from the European Research Council (ERC) under the European Union’s Horizon 2020 research and innovation program (Grant Agreement No. 802107).
The research of M. Yeung was supported by `Research Foundation -- Flanders, FWO' file number 11E6821N}
\date{\today}

\begin{abstract}
We study large classes of real-valued analytic functions that naturally emerge in the understanding of Dulac's problem, which addresses the finiteness of limit cycles in planar differential equations. Building on a Maximum Modulus-type result we got, our main statement essentially follows. Namely, for any function belonging to these classes, the following dichotomy holds: either it has isolated zeros or it coincides with the identity. As an application, we prove that the non-accumulation of limit cycles holds around a specific class of the so-called superreal polycycles.
\end{abstract}

\maketitle

\section*{Introduction}\label{sec:intro}

In this manuscript, generally speaking, we prove that concrete families of real-valued functions arising in the study of Dulac's problem about the finiteness of limit cycles in planar differential equations are well-behaved. Namely, going into the Complex Analytic domain, we prove that for any function belonging to such classes it happens that either it has isolated zeros or it coincides with the identity. In doing so, we generalize concepts first introduced by Yu.S. Ilyashenko in his seminal work \cite{ilyashenkoFiniteness}.

One of our main motivations is to gain a better understanding of Dulac's problem, because the proposed proofs; cf., \cite{ecallePreuve,ilyashenkoFiniteness}, seem to be far from fully understood by most of the specialists. For an overview on Dulac's problem and the main definitions and concepts related to, we refer to the introduction of Yu.S. Ilyashenko's book \cite{ilyashenkoFiniteness}. We also recommend the recent book \cite[\S 24]{Ilyashenko08lectureson}, which contains a historic review of the still unsolved Hilbert sixteenth problem, as well as the aforementioned Dulac's problem, including a complete proof of the non accumulation of limit cycles around the so-called hyperbolic polycycles, originally proved in \cite{Ilyashenko1984}. Throughout this manuscript, we will primarily follow the definitions and notations used in these works.

In \cite{Melvin24}, M. Yeung offered a constructive approach (in the traditional sense) to proving non oscillation of return maps of groups of polycycles, drawing on a part of the ideas present in Yu.S. Ilyashenko's work \cite{ilyashenkoFiniteness}. Essentially, for a suitable choice of classes $\mathcal{R}$ and $\mathcal{NC}$ (that we will define rigorously later), linked to the types of saddles appearing in the polycycles of interest, one comes to the point that the primary focus is on the classes of functions:
\begin{equation*}
    \langle \Aff, A^{i}\mathcal{R}^{0}, A^{j}\mathcal{NC}\mid 0 \leq i \leq n, 0 \leq j \leq n - 1\rangle,
\end{equation*}
where $\langle . \rangle$ means group generated under composition, $A$ is conjugation with the exponential; i.e., $A(f) \coloneqq \ln \circ f \circ \exp$, and $\mathcal{R}^{0}$ is the subset of $\mathcal{R}$ with identity linear part.

We will address how to show non oscillation for certain elements of the above group when the elements of the class $\mathcal{NC}$ exhibit Stokes phenomena of a type to be defined below.

In particular, by induction on $n$, we will be considering elements of the form:
\begin{equation*}
    \Aff \circ \langle \mathcal{R}, \mathcal{NC}\rangle \circ \cdots \circ A^{n}(\langle \mathcal{R}, \mathcal{NC}\rangle)  \circ \cdots \circ \langle \mathcal{R}, \mathcal{NC}\rangle \circ \Aff .
\end{equation*}
That is to say, the amount of $A$ that is put around $\langle \mathcal{R}, \mathcal{NC}\rangle$ first increases to $n$, and then decreases back to zero. For simplicity's sake we will define a class $\mathcal{NC}$ such that $\mathcal{R} \subset \mathcal{NC}$, essentially because we will at no point need the larger domain of $\mathcal{R}$ which was vital for induction in the proof of \cite{ilyashenkoFiniteness}.

The classes $\Aff$ and $\mathcal{NC}$ we will consider here actually are the following (for more detail see Definition \ref{DefredefNC}):

\begin{Definition}
We define the class $\Aff$ to be the class of affine real analytic functions with positive derivative; i.e., the functions:
\begin{equation*}
    \zeta \mapsto \alpha\zeta + \beta \quad,\quad \alpha, \beta \in \mathbb{R}, \alpha > 0.
\end{equation*}
We define $\mathcal{R}$ to be the class of almost regular functions of \cite[Definition 24.27]{Ilyashenko08lectureson}, which are real on the real axis.
\end{Definition}

\begin{Definition}
Let $\mathcal{NC}$ be the set of real analytic germs at infinity which can be extended to extendable cochains on some standard quadratic domain; i.e., a domain of the form:
\begin{equation*}
    \Omega \coloneqq \Psi(\mathbb{C}^{+}),
\end{equation*}
with $\Psi(\zeta) = \zeta + C\sqrt{\zeta + 1}$ for some $C > 0$ (positive branch of the square root) and $\mathbb{C}^{+}$ being the complex half-plane with positive real part.

With partition given by the lines $\im(\zeta) = k\pi, k \in \mathbb{Z}, k \neq 0$; i.e., on each of the strips $\Pi$ in $\mathbb{C}^{+}$ bounded by two adjacent lines of the form $\im(\zeta) = k\pi$, except $k = 0$, we get an analytic function which can be analytically continued to a strip with a larger width. We will denote by $\Pi_{\epsilon}$ the strip $\Pi$ widened by $\epsilon$ on both sides (still inside the standard quadratic domain).

Then the class $\mathcal{NC}$ is the subset of such cochains $f$ for which:
\begin{enumerate}
    \item{There exists some series:
    \begin{equation*}
        \zeta + \sum P_{n}(\zeta)e^{-c_{n}\zeta}\,,
    \end{equation*}
    with the $P_{n}$ real polynomials, $c_{n} > 0$ real and going to $+\infty$ such that for any $m > 0$ there exists a finite sum $S_{N}$ up to some $N$ which approximates all the component functions of $f$ uniformly up to accuracy $O(e^{-m\zeta})$; i.e., there exists some $C > 0, \epsilon > 0$ and some $\xi_{0} > 0$ such that for all strips $\Pi$, for all $\zeta \in \Pi_{\epsilon}$ with $\re(\zeta) > \xi_{0}$ for the analytic function $f_{\Pi}$ on the strip $\Pi$:
    \begin{equation*}
        |f_{\Pi}(\zeta) - S_{N}(\zeta)| < Ce^{-m\re(\zeta)}\,.
    \end{equation*}
    }
    \item{There exists some $\epsilon > 0$, $C, C' > 0$ such that for any two strips $\Pi, \Pi'$ with respective functions $f_{\Pi}, f_{\Pi'}$ we have for all $\zeta \in \Pi_{\epsilon} \cap \Pi'_{\epsilon}$:
    \begin{equation*}
        |f_{\Pi}(\zeta) - f_{\Pi'}(\zeta)| \leq Ce^{-C'e^{\re(\zeta)}}\,.
    \end{equation*}
    }
    \item{The function $f_{\Pi_{0}}$ on the strip $\Pi_{0}$ containing the real axis, is the original real analytic germ in the class $\mathcal{NC}$.}
\end{enumerate}
\end{Definition}

\begin{Remark}
To those familiar with the Theory of summability this may look rather familiar. In fact, the way the class $\mathcal{NC}$ comes up in \cite{ilyashenkoFiniteness} is that they contain the normalization maps of semihyperbolic saddles to their formal normal form on the centre side (suitably normalized and put in the logarithmic chart $\zeta = -\ln(z)$, with $z$ the usual coordinate).

Considering this, we emphasize that we have made a simplification assuming $f$ to be real on the real axis. In essence, this only holds for semihyperbolic saddles where the Martinet-Ramis moduli; cf., \cite{MartinetRamisSemihyperbolic} and \cite[\S 3]{IlyashenkoStokesPhenomena}, on the axis corresponding to the one-sided transversal for the Dulac map are all zero, an infinite codimension. But without this assumption asymptotics becomes much more tedious and even in the more general case it is entirely unknown what to do.
\end{Remark}

\begin{Definition}
We will call a semihyperbolic saddle with Martinet-Ramis moduli as above \emph{superreal}. The term ``superreal" was suggested by Ilyashenko.
\end{Definition}

The following is then our main Theorem:

\begin{theorem-a}\label{Theorem:Main Thm}
Let $g$ be an element of:
\begin{equation*}
    \Aff \circ \langle \mathcal{NC}\rangle \circ A(\langle \mathcal{NC}\rangle) \circ \cdots \circ A^{n - 1}(\langle\mathcal{NC}\rangle) \circ A^{n}(\langle \mathcal{NC}\rangle) \circ A^{n - 1}(\langle\mathcal{NC}\rangle) \circ  \cdots \circ A(\langle \mathcal{NC}\rangle)\circ \langle \mathcal{NC}\rangle \circ \Aff\,.
\end{equation*}
Then either $g \equiv \id$ or $g$ has no fixed points close enough to $+\infty$.

In particular if for all $\mu > 0$, for $x$ large enough real positive (depending on $\mu$):
\begin{equation*}
    |g(x) - x| < e^{-\mu \exp^{n}(x)}\,,
\end{equation*}
($n$-fold composition of the exponential) then $g \equiv \id$.
\end{theorem-a}

As a consequence of Theorem A, we obtain a positive answer for a restricted version of Dulac's problem. To do this we first need to introduce the notion of \emph{depth} of a polycycle; cf., \cite{Melvin24}.

So take a polycycle homeomorphic to a circle in a vector field on a real analytic $2$-manifold. We may now parametrize our polycycle, say $\Gamma$, with $\gamma\colon [0, 1] \to \Gamma$, starting at an arbitrary point $x \in \Gamma$, say that $x$ is not equal to an equilibrium. Suppose that $\gamma$ is injective on $(0, 1)$.

Then for a $t \in [0, 1]$ we can define the \emph{depth} of $\gamma$ at $t$, $D(\gamma, t)$ as follows:
\begin{equation*}
    D(\gamma, t) \coloneqq \mathfrak{C}-\mathfrak{H}\,,
\end{equation*}
where
\begin{equation*}
    \mathfrak{C}\coloneqq \# \{\text{semihyperbolic saddles in }\gamma((0, t]) \text{ from the centre direction}\}\,,
\end{equation*}
and
\begin{equation*}
    \mathfrak{H} \coloneqq \# \{\text{semihyperbolic saddles in }\gamma((0, t]) \text{ from the hyperbolic direction}\}\,.
\end{equation*}
Note first that this is well-defined because a polycycle only has a transit map along a `single side'.

Using this we can define a particular class of polycycles:

\begin{Definition}
We call a polycycle \emph{superreal} if every semihyperbolic saddle in it is superreal. And, we call a polycycle \emph{balanced} if $D(\gamma, 0) = D(\gamma, 1)$.

Moreover, we say that a polycycle \emph{has one turn} if there exists a parametrization $\gamma$ such that $D(\gamma, t)$ only goes from decreasing to increasing once; i.e., there exists $t_{0}\in(0,1)$ such that before $t_{0}$ only semihyperbolic saddles going to the central manifold are encountered and after $t_{0}$ only semihyperbolic saddles going away from the central manifold are encountered, hyperbolic saddles may be encountered anywhere.
\end{Definition}

Then from our Main Theorem we have the following partial positive answer to Dulac's problem:

\begin{Corollary}
Any superreal and balanced real analytic polycycle with only one turn has a (one-sided) neighbourhood without limit cycles.
\end{Corollary}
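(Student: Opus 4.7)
The plan is to realize the return map of the polycycle as an element $g$ of the class appearing in Theorem~A and then invoke the dichotomy. I would fix a one-sided transversal at a regular base point of the polycycle $\Gamma$, denote by $P$ the holonomy return map along the chosen side, and pass to the logarithmic chart $\zeta=-\ln z$, so that $P$ becomes a germ $g$ at $+\infty$. In these coordinates, limit cycles accumulating to $\Gamma$ correspond exactly to fixed points of $g$ accumulating to $+\infty$, and the absence of fixed points of $g$ far enough in $\zeta$ is the same as the existence of a one-sided neighbourhood of $\Gamma$ without limit cycles.

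Next, I would decompose $g$ by traversing the polycycle vertex by vertex, alternating transit maps between transversals with the Dulac maps at each saddle. Regular transit maps and the Dulac maps of hyperbolic saddles are almost regular and thus lie in $\mathcal{R}\subset\mathcal{NC}$ in the logarithmic chart. The superreal hypothesis is precisely what places the Dulac map of each semihyperbolic saddle in $\mathcal{NC}$ as well, via the cochain estimates on a standard quadratic domain recalled in the Remark following the definition of $\mathcal{NC}$. The crucial combinatorial point is to track the logarithmic coordinate as we move along the polycycle: each passage through a semihyperbolic saddle that increases the depth by one forces a conjugation of the subsequent factor by the operator $A$, reflecting the passage to the next logarithmic chart, and each passage that decreases the depth peels off an $A$. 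Hence the factor associated with the stretch of the polycycle traversed at depth $d$ sits inside $A^{d}$.

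By the one-turn hypothesis, $D(\gamma,t)$ first increases monotonically from $0$ to some maximum $n$ and then decreases monotonically back to $0$; by the balanced hypothesis the initial and final segments are at depth $0$, so the outermost $\mathcal{NC}$-factors carry no $A$, and only the $\Aff$ maps at the two ends coming from the choice of transversal remain. This places $g$ precisely in
\begin{equation*}
\Aff \circ \langle \mathcal{NC}\rangle \circ A(\langle \mathcal{NC}\rangle) \circ \cdots \circ A^{n}(\langle \mathcal{NC}\rangle) \circ \cdots \circ A(\langle \mathcal{NC}\rangle)\circ \langle \mathcal{NC}\rangle \circ \Aff\,.
\end{equation*}
Theorem~A now yields the dichotomy: either $g$ has no fixed points close enough to $+\infty$, which is the desired conclusion, or $g\equiv\id$, in which case every orbit starting on the transversal close enough to $\Gamma$ is closed and no nearby orbit is an isolated periodic one, so again there are no limit cycles in a one-sided neighbourhood of $\Gamma$.

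The main obstacle I anticipate is the local analysis underlying the second step: checking rigorously that, after the sequence of $A$-conjugations dictated by the depth profile, each semihyperbolic factor is genuinely an $\mathcal{NC}$-cochain on a \emph{common} standard quadratic domain satisfying the asymptotic expansion in item (1) and the supra-exponential flatness of item (2) of the definition. This rests on the Martinet--Ramis description of semihyperbolic saddles and is made clean only by the superreal assumption, which kills the Stokes moduli that would otherwise destroy reality on the axis. Once this local input is in place, the combinatorial bookkeeping of depths and the application of Theorem~A are essentially formal.
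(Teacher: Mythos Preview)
Your proposal is correct and follows essentially the same route as the paper's sketch: factor the return map vertex by vertex in the logarithmic chart, use that hyperbolic Dulac maps lie in $\mathcal{NC}$ while superreal semihyperbolic ones contribute the $\exp/\ln$ that produce the $A$-conjugations, and then let the one-turn and balanced hypotheses force the staircase shape needed to invoke Theorem~A. The only refinement the paper makes explicit that you leave implicit is the precise three-factor decomposition of a semihyperbolic Dulac map (hyperbolic-side normalization, an $\exp$-type map, and an $\mathcal{NC}$ element) together with the insertion of superfluous $\exp\circ\ln$ pairs to pad flat stretches of the depth profile into the exact $A^{0}\cdots A^{n}\cdots A^{0}$ pattern.
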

\begin{proof}[sketch]
It is known (cf., \cite[Lemma 24.40]{Ilyashenko08lectureson}) that any Dulac map of a hyperbolic saddle gives an element of the above described class $\mathcal{NC}$, even without the cochain part; i.e., it is a single function on that domain.

It is also known (essentially \cite{MartinetRamisSemihyperbolic}, but also \cite{Melvin24} for how to manipulate the formal normal form, see also \cite[p. 43]{ilyashenkoFiniteness}, where it is given a table containing all the possible different maps one has to deal with) that the Dulac map of a superreal semihyperbolic saddle can be decomposed into an analytic normalization on the hyperbolic side, something close to an exponential function and an element of the class $\mathcal{NC}$, even up to a shifted half-plane instead of a standard quadratic domain in the logarithmic chart.

The assumption that the polycycle only has one turn then puts it in the correct form to apply Theorem A to the return map of the polycycle, indeed, such a polycycle as represented in \cite{Melvin24} will have some form like this:

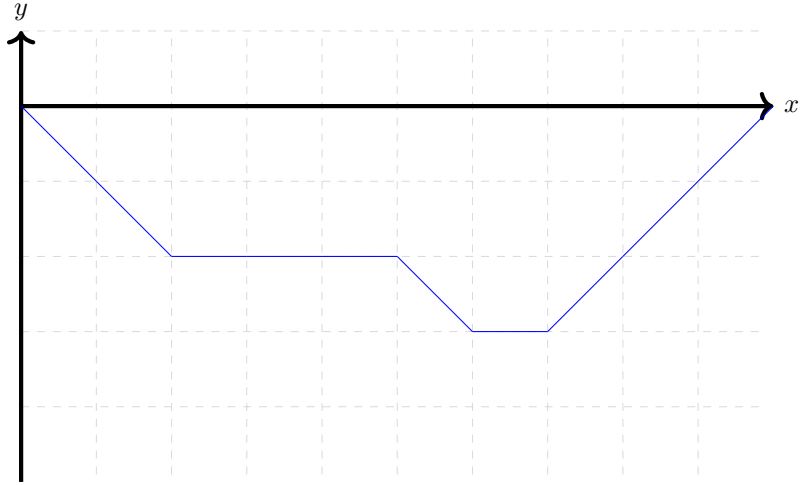
\begin{figure}[H]
	\begin{tikzpicture}
		\draw[help lines, color=gray!30, dashed] (0,-4.9) grid (9.9,1);
		\draw[->,ultra thick] (0,0)--(10,0) node[right]{$x$};
		\draw[->,ultra thick] (0,-5)--(0,1) node[above]{$y$};
		\draw[blue](0, 0)--(1, -1);
		\draw[blue](1, -1)--(2, -2);
		\draw[blue](2, -2)--(3,-2);
		\draw[blue](3, -2)--(4, -2);
		\draw[blue](4, -2)--(5, -2);
		\draw[blue](5, -2)--(6, -3);
		\draw[blue](6, -3)--(7, -3);
		\draw[blue](7, -3)--(8, -2);
		\draw[blue](8, -2)--(9, -1);
		\draw[blue](9, -1)--(10, 0);
	\end{tikzpicture}
	
	\caption{Polycycle with one turn.}
\end{figure}

with horizontal maps being in $\langle \mathcal{NC}\rangle$, maps down being $\exp$ and maps up being $\ln$, so by the shape of it adding the following lines standing for superfluous applications of $\exp$ and $\ln$ gives the correct form:

\begin{figure}[H]
	\begin{tikzpicture}
		\draw[help lines, color=gray!30, dashed] (0,-3.9) grid (9.9,1);
		\draw[->,ultra thick] (0,0)--(10,0) node[right]{$x$};
		\draw[->,ultra thick] (0,-4)--(0,1) node[above]{$y$};
		\draw[blue](0, 0)--(1, -1);
		\draw[red](1, -1)--(1, 0);
		\draw[blue](1, -1)--(2, -2);
		\draw[red](2, -2)--(2, 0);
		\draw[blue](2, -2)--(3,-2);
		\draw[red](3, -2)--(3, 0);
		\draw[blue](3, -2)--(4, -2);
		\draw[red](4, -2)--(4, 0);
		\draw[blue](4, -2)--(5, -2);
		\draw[red](5, -2)--(5, 0);
		\draw[blue](5, -2)--(6, -3);
		\draw[red](6, -3)--(6, 0);
		\draw[blue](6, -3)--(7, -3);
		\draw[red](7, -3)--(7, 0);
		\draw[blue](7, -3)--(8, -2);
		\draw[red](8, -2)--(8, 0);
		\draw[blue](8, -2)--(9, -1);
		\draw[red](9, -1)--(9, 0);
		\draw[blue](9, -1)--(10, 0);
	\end{tikzpicture}
	
	\caption{Splitting up a polycycle with one turn.}
\end{figure}
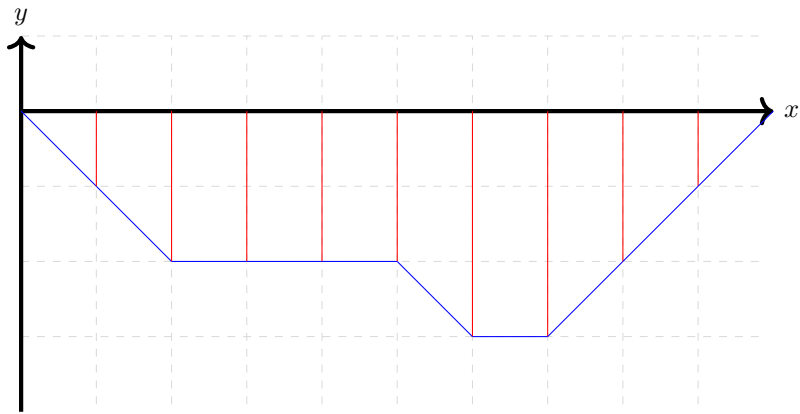

We stress that the $n$ in Theorem A is exactly the height of this polycycle as defined in \cite{Melvin24}.
\end{proof}

\subsection*{Structure of the work}
In Section \ref{Sec:cochains}, we define rigorously the cochains we work with and the terms used in the definition of the class $\mathcal{NC}$. Then we will go into the more technical parts of our work; that is, the repartitioning procedure we require; which strictly speaking is not necessary in the case we are working with, but it is necessary in general when we start working with the full Dulac's problem as in \cite{Melvin24}, so we have elected to nonetheless include it here.

Then we will extend the notion of Cauchy--Heine transform from summability; cf., \cite{BalserFormalODEs,summabilityLodayRichaud}, to a nice set of cochains, obviously including those relevant for Theorem A. Using that we will establish a version of the well-known Phragm\'en--Lindel\"{o}f principle; cf., \cite{PhragmenLindelof-article,theoryofFunctionsTitchmarsh}, which while it may look complicated formally, should essentially be seen as the following statement: Suppose that a cochain $f$ defined, in total, on some large domain $\Omega$ is small enough to satisfy Phragm\'en--Lindel\"{o}f for that domain (for an analytic function $f$ this would mean $f$ is identically zero), then $f$ is at most the size of their Stokes phenomena/coboundary.

Finally we will use these technical results and terminology to more precisely define the class $\mathcal{NC}$, and to prove the remaining Lemmas that allow us to prove Theorem A.

\subsection*{A historical note}
We would like to emphasize that our work has been largely influenced by the seminal work carried out first by Yu.S. Ilyashenko in his approach to Dulac's problem; cf., \cite{ilyashenkoFiniteness}; being the main results proved here generalizations of those that are present in \cite{ilyashenkoFiniteness}. Namely, the cochains we work with are generalized versions of the ones introduced in \cite[\S 1.1, 1.6]{ilyashenkoFiniteness}. Plus, the Cauchy--Heine transform in this article generalizes the one in \cite[\S 3.4B Lemma 1]{ilyashenkoFiniteness}, and the Theorem of Phragm\'en--Lindel\"{o}f we prove generalizes \cite[\S 3.6 Lemma 4]{ilyashenkoFiniteness}.

The proof of Theorem A is essentially as outlined in \cite[\S 3.2]{ilyashenkoFiniteness}, but in a much simpler case, as was the intention. In particular in \cite{ilyashenkoFiniteness} there is a nested (finite) sequence of partitions which had to be worked away using Phragm\'en--Lindel\"{o}f for cochains, while here we only have one. Moreover, we stress that the induction argument here does not work in general.

\subsection*{Acknowledgements}
We would like to thank Dmitry Novikov for the insightful comments he provided during the several discussions we had about our work. Particularly, we are grateful with him for explaining us Ilyashenko's version of Phragm\'en--Lindel\"{o}f principle.

\section{Cochains}\label{Sec:cochains}

Before introducing the cochains of \cite{ilyashenkoFiniteness}, we feel it appropriate to compare and contrast to its `close cousin' \v{C}ech cohomology. The biggest difference which informs the small practical differences is philosophical. For an overview of \v{C}ech cohomology we refer the reader to the classic text by R. Godement; cf., \cite[\S 5]{Godement-book}.

Generally speaking, given a topological space $X$, and an open cover of it $\mathcal{U}$, one of the main ideas behind \v{C}ech cohomology is to deduce global properties of $X$ by knowing local data in terms of $\mathcal{U}$, and how the open sets in $\mathcal{U}$ are glued together. It is in some sense a topological and combinatorial question. The biggest indicator being that this can without difficulty be generalized to sheaves of abelian groups.

When looking at cochains as in \cite{ilyashenkoFiniteness}, we actually start with a single analytic function, say real analytic. Classical Phragm\'en--Lindel\"{o}f actually asserts that the domain of analytic continuation of a real analytic function can preclude some degenerate behaviour; e.g., any oscillation for an real analytic function on the right half-plane still has to have 'peaks' that are of the size $e^{-\lambda x}$.

The question asked in \cite{ilyashenkoFiniteness} is essentially `what if your real analytic function has Stokes phenomenon beyond the domain of analytic continuation? Does this prevent degenerate behaviour?' In this line of inquiry a cochain is supposed to represent a function together with its Stokes phenomenon, unlike in \v{C}ech cohomology where a cochain is of independent interest.

The answer to this question is yes, which is what we will present as Phragm\'en--Lindel\"{o}f for cochains, the idea of the statement is the following: Suppose some real analytic function $f$ admits Stokes phenomenon up to some large domain $\Omega$, suppose on the real axis $f$ is small enough to apply Phragm\'en--Lindel\"{o}f on the large domain $\Omega$, then the size of $f$ on the real axis depends on two things:

\begin{enumerate}
    \item{The size and shape of the overlaps of the elements in covering on which $f$ is defined.}
    \item{The size of the differences on Stokes lines.}
\end{enumerate}
Both of these are things which are of no interest in \v{C}ech cohomology, so we will have to slightly redefine the notions from \v{C}ech cohomology to fit our purposes. Let us start with the idea of a partition:

\begin{Definition}
Let $\Omega \subset \mathbb{C}$ be a domain. A collection $\Xi$ of open subsets of $\Omega$ forms a \emph{partition} of $\Omega$ if:

\begin{enumerate}
    \item{The pairwise intersection of elements in $\Xi$ is empty.}
    \item{In the subspace topology on $\Omega$:
    \begin{equation*}
        \bigcup_{U \in \Xi} \overline{U} = \Omega\,.
    \end{equation*}
    }
    \item{This is locally finite in the sense that any point in $\Omega$ has an open neighbourhood containing only a finite amount of elements of $\Xi$.}
    \item{The boundary of each element of $\Xi$ is piecewise analytic.}
\end{enumerate}

We define $\partial \Xi$ to be the union of $\partial U$, $U \in \Xi$ (the boundary taken in $\Omega$).

Let $x \in \partial \Xi$, then we call $x$ a \emph{regular point} if there exists some open $A$ in $\Omega$, $x \in A$, such that $\partial \Xi \cap A$ is homeomorphic to a line. Else we call $x$ singular.

We call $\Xi$ a \emph{regular partition} if:

\begin{enumerate}
    \item{The singular points are isolated; i.e., for each singular point there is a neighbourhood containing no other singular points.}
    \item{Every singular point has at least one line of regular points going into it. We require that these lines have finite variation in argument; i.e., let $\gamma\colon [0, 1] \to \mathbb{C}$ be a parametrization of one of these curves of regular points going to the singular point $s$ at $t = 1$, then we want that $\arg(\gamma(t) - s)$ (with a branch cut of the logarithm in the origin) remains bounded as $t$ goes to $1$ (note that for this notion the choice of $\arg(\gamma(0) - s)$ does not matter).}
    \item{There exists a positive real number $d$ and a positive integer $n$ such that any ball of radius $d$ around any point has at most $n$ finite lines of regular points in it.}
\end{enumerate}

We call $d$ the \emph{regularity radius} and we call $n$ the \emph{multiplicity}.

We call $\Xi$ \emph{uniformly regular} if in addition there exists a $\delta > 0$ such that there is a distance $\geq \delta$ between a given singular point and all other singular points. We call the maximal $\delta$ the \emph{uniformity constant} of $\Xi$.
\end{Definition}

In order to catch the notion of `size of the overlaps' in a flexible way, that allows for things like Cauchy estimates, we introduce the notion of generalized $\epsilon$-neighbourhoods:

\begin{Definition}
Let $\Omega \subset \mathbb{C}$ be a domain, let $\Xi$ be a partition of $\Omega$, then a \emph{generalized $\epsilon$-neighbourhood} of $\Xi$ is given by:

\begin{enumerate}

    \item{A positive real number $\epsilon_{0}$.}
    \item{For each $\epsilon$ in $(0, \epsilon_{0})$ and each $U \in \Xi$ an open $U_{\epsilon}$ such that:
        \begin{enumerate}
            \item{For each $U \in \Xi$ and $\epsilon \in (0, \epsilon_{0})$ we have in the subspace topology of $\Omega$:
            \begin{equation*}
                \overline{U} \subset U_{\epsilon}\,.
            \end{equation*}
            }
            \item{If $\epsilon < \epsilon'$ and each $U \in \Xi$ we have:
            \begin{equation*}
                U_{\epsilon} \subset U_{\epsilon'}\,.
            \end{equation*}
            }
        \end{enumerate}}
\end{enumerate}
We will often shorten this to `Let $\Xi_{\epsilon}$ be a generalized $\epsilon$-neighbourhood of $\Xi$'.

Let $U \in \Xi$, then for $\epsilon > \epsilon' > 0$ we define the $\epsilon'-\epsilon$-\emph{diameter} of $U$ to be:
\begin{equation*}
    d_{U}(\epsilon', \epsilon) \coloneqq \sup\{r \geq 0 \mid \forall u \in U_{\epsilon'}, B(u, 2r) \subset U_{\epsilon}\},
\end{equation*}
where $B(u, 2r)$ is the ball around $u$ of radius $2r$. We define the $\epsilon'-\epsilon$-\emph{diameter of the partition} $\Xi$ with its generalized $\epsilon$-neighbourhood to be:
\begin{equation*}
    d_{\Xi}(\epsilon', \epsilon) \coloneqq \inf_{U \in \Xi}d_{U}(\epsilon', \epsilon)\,.
\end{equation*}

We say a generalized $\epsilon$-neighbourhood is \emph{regular} if $d_{\Xi}(\epsilon', \epsilon) > 0$ as long as $\epsilon$ is small enough.

Instead of $d_{\Xi}(0, \epsilon)$ or $d_{U}(0, \epsilon)$ we will just write $d_{\Xi}(\epsilon)$ and $d_{U}(\epsilon)$ respectively.
\end{Definition}

Let us talk about some obvious operations to perform with partitions:

\begin{Definition}
Let $\Xi$ and $\Xi'$ be partitions of $\Omega$, then their product is defined to be:
\begin{equation*}
    \Xi\cdot\Xi' \coloneqq (U \cap V)_{U \in \Xi, V \in \Xi'}
\end{equation*}
if this is a partition. We define the product of $\Xi_{\epsilon}$ and $\Xi'_{\epsilon}$, denoted $\Xi_{\epsilon}\cdot\Xi'_{\epsilon}$ by:
\begin{equation*}
    (U\cap V)_{\epsilon} \coloneqq U_{\epsilon} \cap V_{\epsilon}\,.
\end{equation*}
This is then a generalized $\epsilon$-neighbourhood for $\Xi \cdot \Xi'$.

Let $\rho\colon \Omega' \to \Omega$ be a biholomorphism. We define:
\begin{equation*}
    \rho^{-1}\Xi \coloneqq \{\rho^{-1}(U) \mid U \in \Xi\}\,.
\end{equation*}
We define the pullback of $\Xi_{\epsilon}$ by $\rho$, denoted $\rho^{-1}\Xi_{\epsilon}$ by:
\begin{equation*}
    (\rho^{-1}(U))_{\epsilon} \coloneqq \rho^{-1}(U_{\epsilon})\,.
\end{equation*}
This is then a generalized $\epsilon$-neighbourhood of $\rho^{-1}\Xi$.
\end{Definition}

With this notion of partition more suited to the study of Stokes phenomena we can look at cochains:

\begin{Definition}
Let $\Xi_{\epsilon}$ be a generalized $\epsilon$-neighbourhood on $\Omega$, a \emph{cochain} $f$ for this generalized $\epsilon$-neighbourhood consists of a positive real number $\epsilon$, and for each open set $U \in \Xi$ an analytic function $f_{U}$ on $U_{\epsilon}$.

We call $\Omega$ the \emph{total domain} of the cochain $f$. We also say that $f$ is an \emph{($\epsilon$-)extendable cochain}.

We may sometimes say `let $f$ be a cochain' in that case, the total domain is $\Omega^{f}$ and the corresponding partition is $\Xi^{f}$.
\end{Definition}

Let us talk about a few obvious operations we can do with cochains:

\begin{Definition}
Let $f$ and $g$ be cochains on $\Omega$. Then:
\begin{enumerate}
    \item{Their sum $f + g$ is a cochain with partition $\Xi^{f} \cdot \Xi^{g}$ and for $U \in \Xi^{f}$ and $V \in \Xi^{g}$ we have:
    \begin{equation*}
        (f + g)_{U \cap V} = f_{U}|_{U \cap V} + g_{V}|_{U \cap V}\,.
    \end{equation*}
    }
    \item{Their product $f\cdot g$ is a cochain with partition $\Xi^{f} \cdot \Xi^{g}$ and for $U \in \Xi^{f}$ and $V \in \Xi^{g}$ we have:
    \begin{equation*}
        (f\cdot g)_{U \cap V} = f_{U}|_{U \cap V}\cdot g_{V}|_{U \cap V}\,.
    \end{equation*}
    }
    \item{The derivative of $f$, denoted $f'$ is a cochain with partition $\Xi^{f}$ and:
    \begin{equation*}
        (f')_{U} = (f_{U})'\,.
    \end{equation*}
    }
\end{enumerate}

Let $\rho\colon \Omega' \to \Omega$ be a biholomorphism, then $f \circ \rho$ is defined on $\rho^{-1}\Xi^{f}$ by:
\begin{equation*}
    (f \circ \rho)_{\rho^{-1}(U)} = f_{U} \circ \rho\,.
\end{equation*}
\end{Definition}

\section{Repartitioning}

The reason for this part is that, those elements arising from the Additive Decomposition Theorem, as presented in \cite[cf., Theorem 2.14]{Melvin24}, will naturally occur on a regular partition; however, to get really much information we need uniformly regular partitions. The point of this section is to show that cochains on regular partitions with regular $\epsilon$-neighborhoods are also defined on a uniformly regular partition with regular $\epsilon$-neighbourhoods, preserving certain features we will be interested in. To get there, we will first need to introduce a way of rechoosing $\epsilon$-neighbourhoods.

\begin{Definition}
Suppose given a partition $\Xi$, then we can define the natural $\epsilon$-neighbourhoods associated to $\Xi$ taking for each $\epsilon > 0$ and each $U \in \Xi$:
\begin{equation*}
    U_{\epsilon} = B_{\epsilon}(U) \coloneqq \left\{z \in \mathbb{C} \mid \inf_{u \in U}|z - u| < \epsilon\right\}.
\end{equation*}
we call any cochain $f$ which is $\epsilon$-extendable in these neighbourhoods \emph{naturally $\epsilon$-extendable}.
\end{Definition}

\begin{Remark}
It is clear that if a cochain $f$ is extendable on a regular set of neighbourhoods, then it is naturally extendable.

In some sense, this emphasizes that our main concern is extendability in the natural sense. However, in practice, problems often arise with more apparent extensions—such as sectors to larger sectors or strips to larger strips; etc. Attempting to convert every such generalized neighborhood to the natural one is not only tedious but may also result in losing crucial information. For example, changing the opening of a sector affects the Phragmén–Lindelöf theorem for the domain, while adding balls of a given radius does not.
\end{Remark}

What we then want to prove is the following:

\begin{Theorem}\label{ThmRepart}
Let $f$ be a cochain naturally $\epsilon$-extendable on some regular partition $\Xi$ on some domain $\Omega$ contained in the right half-plane $\mathbb{C}^{+}$. Let $\epsilon$ be a positive real number smaller than the regularity radius. Then there exists a uniformly regular partition $\Xi'$ on the same total domain with uniformity constant $\frac{\epsilon}{3}$ such that the cochain $f$ is naturally $\frac{\epsilon}{3}$-extendable on $\Xi'$.

Moreover, the partition $\Xi'$ can be chosen such that outside of a distance $\frac{\epsilon}{6}$ from the singular points of $\Xi$, $\partial \Xi = \partial \Xi'$. Plus, the partition $\Xi'$ has the same regularity radius.
\end{Theorem}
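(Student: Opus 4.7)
My strategy is to modify $\Xi$ by local surgery near clusters of mutually close singular points, replacing each cluster with a single new singular point in a canonical ``star'' configuration, and then to verify that the resulting $\Xi'$ is uniformly regular and that $f$ automatically extends naturally to the new $\epsilon/3$-neighbourhoods. Since all changes happen inside a small neighbourhood of the singular set of $\Xi$, the old analytic extensions on $U_{\epsilon}$ will provide analytic extensions on $U'_{\epsilon/3}$ essentially for free.

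\textbf{Step 1 (clustering of singular points).} The assumption $\epsilon < d$ together with the multiplicity bound $n$ implies that any ball of radius $\epsilon$ contains at most $n$ singular points of $\Xi$ (each contributes at least one line of regular points), so the singular set is locally finite. I group the singular points by choosing cluster centres greedily: iteratively pick a singular point that is not within distance $\epsilon/3$ of any previously chosen centre and declare it a new centre $s^{*}_{\alpha}$, then assign each remaining singular point to its nearest centre. This yields a family of clusters $C_{\alpha}$ each of cardinality at most $n$, with centres pairwise at distance at least $\epsilon/3$.

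\textbf{Step 2 (local surgery and cochain).} For each cluster $C_{\alpha}$ set $M_{\alpha} := \bigcup_{s \in C_{\alpha}} B(s, \epsilon/6)$, and let $M := \bigcup_{\alpha} M_{\alpha}$. By local finiteness $\partial \Xi$ meets $\partial M_{\alpha}$ in finitely many points; replace $\partial \Xi \cap M_{\alpha}$ by a star of piecewise analytic arcs joining each entry point to $s^{*}_{\alpha}$, chosen so as to be mutually non-crossing except at $s^{*}_{\alpha}$, and declare $\partial \Xi' = \partial \Xi$ outside $M$. The singular points of $\Xi'$ are then exactly the $s^{*}_{\alpha}$, with pairwise distance $\geq \epsilon/3$ by construction. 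Each $U' \in \Xi'$ has a canonically associated $U \in \Xi$, namely the unique old element with $U' \setminus M = U \setminus M$, and I define $f_{U'}$ to be the restriction to $U'_{\epsilon/3}$ of the given analytic extension of $f_{U}$ on $U_{\epsilon} = B_{\epsilon}(U)$.

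\textbf{Step 3 (verification) and main obstacle.} For the inclusion $U'_{\epsilon/3} \subset U_{\epsilon}$, the part outside $M$ is immediate. Inside $M_{\alpha}$, each sector of $U'$ meets $\partial M_{\alpha}$ in an arc across which it borders $U$, and any point $u'$ of this sector lies within $\epsilon/6$ of some $s \in C_{\alpha}$; combining this with the bounded cluster diameter, any $x$ with $|x - u'| < \epsilon/3$ is within $\epsilon$ of some $u \in U$, giving the inclusion. The regularity radius is preserved because outside $M$ nothing changes and inside each $M_{\alpha}$ the star has only one singular point with a bounded number of incoming arcs. The main technical hurdle is precisely the control of cluster diameter: the naive greedy clustering only guarantees diameter $\leq 2\epsilon/3$, which is too large for the inclusion $U'_{\epsilon/3} \subset U_{\epsilon}$ as stated. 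Overcoming this will require either tightening the clustering tolerance (splitting would-be-clusters that are too spread out) or refining the surgery so that each $M_{\alpha}$ may carry several new singular points, still pairwise at distance $\geq \epsilon/3$; both options make essential use of the multiplicity bound $n$ and the assumption $\epsilon < d$ to keep the combinatorics of the modification regions under control.
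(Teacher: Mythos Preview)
Your overall strategy---local surgery near the singular set, replacing each ``bad'' neighbourhood by a star through a single new singular point---is exactly the paper's idea. The difference is in how the modification regions are organised, and this is where your proposal breaks.

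The paper does not cluster the singular points at all. Instead it fixes a tiling of the plane by axis-parallel squares of ``radius'' $\epsilon/6$ (so side length $\epsilon/3$, diagonal $\sqrt{2}\,\epsilon/3$), chosen so that no singular point lies on a grid line. In each square that contains at least one singular point of $\Xi$, one erases $\partial\Xi$ inside the square and replaces it by straight segments from the finitely many points where $\partial\Xi$ meets the square's boundary to the \emph{centre} of the square. This buys three things simultaneously:
\begin{itemize}
\item the modification regions are pairwise disjoint by construction (they are tiles), so the surgery is globally well-defined and the ``canonical association'' $U'\leftrightarrow U$ is unambiguous;
\item the new singular points are the centres of the used tiles, hence automatically at mutual distance $\geq \epsilon/3$;
\item every point of $U'\setminus U$ lies inside some used square, whose diameter is $\sqrt{2}\,\epsilon/3$, so $U'_{\epsilon/3}\subset B_{\epsilon/3+\sqrt{2}\epsilon/3}(U)\subset B_{\epsilon}(U)=U_{\epsilon}$, which is precisely the natural $\epsilon/3$-extendability.
\end{itemize}

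Your greedy clustering fails on the first and third points. You already flag the third (cluster diameter up to $2\epsilon/3$, hence $M_{\alpha}$ of diameter up to $\epsilon$, which is too large for $U'_{\epsilon/3}\subset U_{\epsilon}$). But there is a second gap you do not mention: two centres $s^{*}_{\alpha},s^{*}_{\beta}$ can be exactly $\epsilon/3$ apart while each carries cluster members at distance nearly $\epsilon/3$; then $M_{\alpha}$ and $M_{\beta}$ overlap, the two stars interfere, and your ``canonically associated $U\in\Xi$'' need not exist (a single $U'$ can border several surgery regions whose old pieces came from different elements of $\Xi$). Tightening the clustering tolerance, as you suggest, just reproduces the same two problems at a smaller scale; the clean fix is to abandon clustering and use a grid, which is what the paper does.
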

	
\begin{proof}
The proof is essentially by local surgery as on the picture below:

\begin{figure}[htb]
    \includegraphics[scale = 0.7]{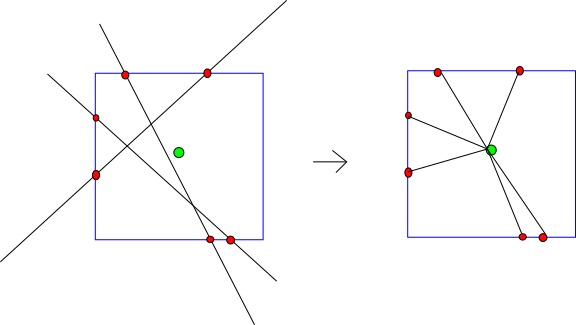}
    \centering
    \caption{Local surgery.}
\end{figure}

The red points remaining the same across both drawings and the green point being the centre.

The argument is roughly that we can keep performing the surgery above in a tiling of squares with radius $\frac{\epsilon}{6}$ (by radius we mean the distance from the centre to one of the sides) around singular points, resulting in a distance of $\frac{\epsilon}{3}$ between two singular points. By the singular points being isolated, this gives a new partition.

We may assume without loss of generality that there are no singular points on the boundary of any of the squares. Indeed, if we only consider tilings with a corner at the origin, for any fixed point there are at most a countable amount of tilings having this point on its boundary. So, because we only have a countable amount of singular points we can always choose a positive real number $\epsilon$ among the uncountable choices such that there are no singular points on the boundary of the tiling by squares with radius $\frac{\epsilon}{6}$.

We can have a look at the amount of points on the boundary of such a square, it is at most twice the multiplicity because any line segment can at most intersect the boundary twice, implying that a finite amount of lines go into each new singular point. Then if we take squares with the same radius as the old regularity radius over $\sqrt{2}$, it can only have a finite amount $m$ of such squares in it because any such square has a set radius and any of the $n$ old curves at worst breaks up into two curves with each ball, thus turning into $m + 1$ curves giving $n(m + 1)$ as multiplicity.

Hence, this partition $\Xi'$ clearly satisfies all properties ascribed to it. And because the size of the set we manipulate is at worst $\frac{\sqrt{2}\epsilon}{3}$ away from a boundary point; i.e., at worst a boundary we care about is on one boundary point and we need to drag it to the opposite boundary point, we still have natural $\frac{\epsilon}{3}$-extendability of our function $f$.
\end{proof}

\section{The Cauchy--Heine transform}

In this section, we introduce a concept of coboundary suitable for our purposes. To compare and contrast with \v{C}ech cohomology again, we do not care about the fact that going from cochain to coboundary gives a differential and later a complex. What we do care about is differences of our cochain on Stokes lines because we are interested in the qualitative difference between an analytic function on a domain and a cochain on that same domain. To that end, we also care about the coboundary as a single function on the total domain, because it, in a sense, quantifies the difference between being a function and a cochain.

\begin{Definition}
Let $\Xi$ be a regular partition. Then the \emph{coboundary} of an extendable cochain $f$ on the partition $\Xi$ is the tuple of analytic function $\delta_{U, V} f$ indexed by $(U, V) \in \Xi^{2}$ defined on $\partial U \cap \partial V$ by:
\begin{equation*}
    \delta_{U, V}f(z) \coloneqq f_{U}(z) - f_{V}(z) ,
\end{equation*}
together with the choice of $(U, V)$ we get a natural orientation on $\partial U \cap \partial V$ by saying that $U$ is on the left and $V$ is on the right.
\end{Definition}

Now we get to the part that allows us to relate a cochain to an analytic function on its entire domain.

\begin{Theorem}[Trivialization of a cocycle]\label{Theorem triv cocycle}
Let $\Omega \subset \mathbb{C}$ be a domain, let $\Xi$ be a regular partition, let $U_{\epsilon}$ be a regular choice of generalized $\epsilon$-neighbourhoods. Let $f$ be an $\epsilon$-extendable cochain, and let us take a choice $\delta f$ of coboundary.

Let us define for any path $A$ consisting only of the regular points of $\partial \Xi$:
\begin{equation*}
    \mathfrak{C}_{A}(f)(\zeta) \coloneqq \frac{1}{2}\sum_{(U, V) \in \Xi^{2}}\frac{1}{2\pi i}\int_{A}\frac{\delta_{U, V}f(\tau)}{\tau - \zeta}d\tau .
\end{equation*}
Here we integrate over $A$ using the appropriate orientation for $\delta_{U, V} f$ defined above, we assume that if we approach a singularity we are able to integrate by approaching the singularity; that is, the integral up to distance $\epsilon > 0$ away from the singularity converges to something. We have for every pair $(U, V) \in \Xi^{2}$ that $\delta_{U, V} \mathfrak{C}_{A}(f) = \delta_{U, V}f$ on the interior of $A$ (as subset of $\partial \Xi$).

Moreover, let $B$ be an open such that $A$ is the set of regular points in $B \cap \partial \Xi$. Suppose that:

\begin{enumerate}
    \item{We have:
    \begin{equation*}
        \frac{1}{2}\sum_{(U, V) \in \Xi^{2}}\frac{1}{2\pi}\int_{\partial \Xi}|\delta_{U, V}f(\tau)|d\tau < \infty .
    \end{equation*}
    }
    \item{For each $U$ and $V$ in $\Xi$ we have some $\epsilon > 0$ such that $|\delta_{U, V} f|$ is bounded up to $U_{\epsilon} \cap V_{\epsilon}$.}
    \item{Close enough to each singular point in $\partial \Xi \cap B$ $f$ is bounded in the sense that there is some neighbourhood $X$ of the singular point such that for all $W \in \Xi$, $f_{W}$ is bounded on $X \cap W_{\epsilon}$.}
\end{enumerate}

Then $f - \mathfrak{C}_{A}$ is analytic in the entirety of $B$; i.e., including on the singular points of $\partial\Xi$.

When there is no subscript $A$, then we assume $A = \partial \Xi$ and we call $\mathfrak{C}(f)$ the Cauchy--Heine transform of $f$.
\end{Theorem}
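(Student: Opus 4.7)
My plan is to establish the theorem in two stages: first the jump formula $\delta_{U,V}\mathfrak{C}_{A}(f) = \delta_{U,V}f$ on the interior of $A$, and then the removability of each isolated singular point of $\partial\Xi$ lying in $B$. Both parts rest on classical complex analysis (Plemelj--Sokhotski and Riemann's removable singularity theorem); the genuinely new content sits at the singular points of $\partial\Xi$, where the cocycle nature of $\delta f$ has to be used.

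For the jump formula, I would fix a regular point $\zeta_{0}$ sitting on a single curve $\partial U \cap \partial V$. Because $\Xi$ is regular with isolated singular points, every other curve $\partial U' \cap \partial V'$ stays away from a whole neighbourhood of $\zeta_{0}$, so every term of the sum defining $\mathfrak{C}_{A}(f)$ other than the ones indexed by $(U,V)$ and $(V,U)$ is analytic across $\zeta_{0}$ and contributes nothing to the jump. The two remaining terms are equal: the orientation convention ``$U$ on the left'' reverses between the two orderings while at the same time $\delta_{V,U}f = -\delta_{U,V}f$, and the two sign flips cancel. Applying Plemelj--Sokhotski to either and using the normalising factor $1/2$ yields $(\mathfrak{C}_{A}f)_{U}(\zeta_{0}) - (\mathfrak{C}_{A}f)_{V}(\zeta_{0}) = \delta_{U,V}f(\zeta_{0})$, as claimed. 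Consequently $f - \mathfrak{C}_{A}(f)$ has vanishing coboundary across every regular point of $A \cap B$ and therefore extends analytically across every such point (Painlev\'e / removal of an analytic arc).

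The substantive step is to extend $f - \mathfrak{C}_{A}(f)$ analytically across each isolated singular point $z_{0}$ of $\partial\Xi$ in $B$, which I would do via Riemann's removable singularity theorem. Boundedness of $f$ near $z_{0}$ is hypothesis~(3); the delicate point is to bound $\mathfrak{C}_{A}(f)$. I would split its defining integral into a global piece supported away from $z_{0}$, uniformly bounded in $\zeta$ by hypothesis~(1), and a local piece over the finitely many arcs $A_{1},\dots,A_{k}$ of $\partial\Xi$ converging to $z_{0}$; finiteness of $k$ is guaranteed by the multiplicity bound of a regular partition, and the finite-variation-of-argument clause fixes a well-defined tangent direction $\theta_{i}$ at $z_{0}$ for each arc. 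Hypothesis~(2) lets me expand $\delta_{U_{i},U_{i+1}}f(\tau) = c_{i} + O(|\tau - z_{0}|)$ along $A_{i}$, with $c_{i} \coloneqq \delta_{U_{i},U_{i+1}}f(z_{0})$; the $O$-term contributes an integrand that is integrable in $\tau$ and bounded in $\zeta$ near $z_{0}$, while the constant part yields the expected $-\frac{c_{i}}{2\pi i}\log(z_{0}-\zeta)$ principal part plus a bounded remainder. Summing over the cyclically arranged sectors at $z_{0}$ multiplies the logarithmic part by $\sum_{i} c_{i} = \sum_{i}(f_{U_{i}} - f_{U_{i+1}})(z_{0}) = 0$, so the singularities telescope away and $\mathfrak{C}_{A}(f)$ stays bounded near $z_{0}$; Riemann's theorem then closes the argument. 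The main obstacle I foresee is precisely this last telescoping: extracting the logarithmic principal part along a piecewise-analytic arc with only a prescribed tangent direction at $z_{0}$, and controlling the bounded remainder uniformly in $\zeta$ using only the integrability and boundedness supplied by hypotheses~(1) and~(2).
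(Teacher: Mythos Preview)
Your proof proposal is correct, and the jump-formula part via Plemelj--Sokhotski matches the paper's Lemma~\ref{Lemma CH trans} (which proves it by contour deformation). The treatment of the singular points, however, is genuinely different.

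At a singular point $z_{0}$, you go in one step: expand $\delta f$ along each incoming arc as $c_{i}+O(|\tau-z_{0}|)$, extract the logarithmic principal part, and kill it by the cocycle identity $\sum_{i}c_{i}=\sum_{i}\bigl(f_{U_{i}}(z_{0})-f_{U_{i+1}}(z_{0})\bigr)=0$. This is clean and shows directly that $\mathfrak{C}_{A}(f)$ is bounded near $z_{0}$, whence Riemann applies once. The paper instead proceeds in two steps: it first invokes the quantitative bound of Lemma~\ref{Lemma Unif CH estimates} on circles of radius $2d$ about $z_{0}$ to get $|\mathfrak{C}_{A}(f)|\lesssim 1/d$, hence $(z-z_{0})^{2}(f-\mathfrak{C}_{A}(f))\to 0$, which by Riemann rules out poles of order $\geq 2$; it then excludes a simple pole by arguing that the local contribution behaves like $(z-z_{0})\log(z-z_{0})$ and appeals to the finite-variation-of-argument clause in the definition of a regular partition to force this limit to vanish along radial approach. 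Your route is more direct and does not need that clause at all (once $\sum_{i}c_{i}=0$, the branch ambiguities in the individual $\log(z_{0}-\zeta)$'s are harmless); the paper's route has the advantage of reusing Lemma~\ref{Lemma Unif CH estimates}, which is needed independently for the Phragm\'en--Lindel\"of estimates downstream. One small point you should make explicit: the expansion $c_{i}+O(|\tau-z_{0}|)$ is legitimate because $\delta_{U_{i},U_{i+1}}f$ is actually analytic in a full neighbourhood of $z_{0}$ (since $z_{0}\in\overline{U_{i}}\cap\overline{U_{i+1}}\subset (U_{i})_{\epsilon}\cap(U_{i+1})_{\epsilon}$), not merely bounded as stated in hypothesis~(2).
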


In order to prove this theorem we need to understand the function:
\begin{equation*}
    g(z) \coloneqq \frac{1}{2\pi i}\int_{\gamma}\frac{\phi(\zeta)}{z - \zeta}d\zeta \,,
\end{equation*}
for suitable $\gamma$ and $\phi$. Let us start with a very simple case inspired by similar proofs in the Gevrey asymptotics case; cf., \cite[Theorem 1.4.2]{summabilityLodayRichaud}:

\begin{Lemma}\label{Lemma CH trans}
Let $U$ be a open set in $\mathbb{C}$. Let $\phi\colon U \to \mathbb{C}$ be an analytic function. Let $\gamma$ be a smooth finite positive length curve in $U$ without self-intersection. Fix an orientation for $\gamma$, thus also fixing locally around $\gamma$ a sense of left and right. Let $a, b \in U$ be distinct points not on $\gamma$, let $\alpha$ be a finite length smooth curve from $a$ to $b$ with no self-intersection and intersecting $\gamma$ exactly once, going from left to right (see illustration below). Suppose that on $U\setminus \gamma$ the following function is defined:
\begin{equation*}
    g(z) \coloneqq \frac{1}{2\pi i}\int_{\gamma}\frac{\phi(\zeta)}{z - \zeta}d\zeta\,.
\end{equation*}
Then it is possible to analytically continue $g$ along the paths:

\begin{enumerate}
    \item{$\alpha$ from $a$ to $b$, we call this value $g^{+}(b)$.}
    \item{The inverse of $\alpha$ from $b$ to $a$, we call this value $g^{-}(a)$.}
\end{enumerate}

These values satisfy the following relations:
\begin{equation*}
    g^{+}(b) - g(b) = \phi(b)\,, \quad g(a) - g^{-}(a) = \phi(a)\,.
\end{equation*}
Moreover, it is possible to analytically continue $g$ to $\gamma$ from both sides of $\gamma$.
\end{Lemma}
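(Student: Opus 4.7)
My plan is a contour deformation followed by an application of the residue theorem, in the spirit of the classical proof for Cauchy--Heine integrals in summability (cf.\ \cite[Theorem 1.4.2]{summabilityLodayRichaud}). Let $p$ denote the unique crossing point of $\alpha$ and $\gamma$. Since $U$ is open and contains both curves, I would choose a small open disk $B \subset U$ centred at $p$ in which $\gamma$ and $\alpha$ each appear as a single smooth arc, crossing once, and outside of which $\gamma$ and $\alpha$ are disjoint. The key step is to deform $\gamma$ inside $B$ to a smooth arc $\gamma'$ with the same endpoints on $\partial B$ (so $\gamma'$ equals $\gamma$ outside $B$), keeping $\gamma'\subset U$, and chosen so that $\gamma'$ no longer meets $\alpha$; for computing $g^{+}$ one pushes the relevant piece of $\gamma$ across $\alpha$ on the $b$-side, while for $g^{-}$ one pushes it across on the $a$-side.

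Set
\[
\tilde g(z) \coloneqq \frac{1}{2\pi i}\int_{\gamma'}\frac{\phi(\zeta)}{z-\zeta}\, d\zeta,
\]
which is analytic on $U\setminus\gamma'$, hence in particular on a neighbourhood of $\alpha$. Because $\phi$ is analytic on all of $U$, the difference $\tilde g(z) - g(z)$ equals $\frac{1}{2\pi i}$ times the integral around the closed bubble $\gamma'\gamma^{-1}$, which by the residue theorem is $-n(z)\phi(z)$, where $n(z)$ denotes the winding number of the bubble around $z$ (the residue of $\phi(\zeta)/(z-\zeta)$ at $\zeta=z$ being $-\phi(z)$). For the deformation relevant to $g^{+}$, the bubble lies on the $b$-side of $\alpha$, so $n(a)=0$ and $|n(b)|=1$; hence $\tilde g(a)=g(a)$, which identifies $\tilde g$ as the analytic continuation of $g$ along $\alpha$, while $\tilde g(b)$ differs from $g(b)$ by $\pm \phi(b)$, giving the stated jump. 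The argument for $g^{-}$ is symmetric, with the roles of $a$ and $b$ interchanged and the deformation on the other side of $\alpha$.

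For the final assertion that $g$ extends analytically to $\gamma$ from either side, I would apply the same local deformation at any chosen regular point $q \in \gamma$: push $\gamma$ in a small disk around $q$ slightly to one side, and the resulting modified Cauchy integral is analytic in a neighbourhood of $q$ and coincides with $g$ on the side of $\gamma$ opposite to which the curve was pushed, providing the desired extension. The main point demanding care is the orientation bookkeeping for the bubble $\gamma'\gamma^{-1}$, balancing the fixed orientation of $\gamma$ against the left-to-right convention on $\alpha$, so that the sign of the jump comes out exactly as stated; the deformations themselves are unproblematic because $U$ is open and the crossing with $\alpha$ is transversal, leaving ample room.
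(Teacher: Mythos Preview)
Your overall strategy---deform $\gamma$ to a nearby curve $\gamma'$, then apply the residue theorem to the closed ``bubble'' $\gamma'\gamma^{-1}$---is exactly the paper's, but the deformation you describe is topologically impossible. If $B$ is a small disk about the crossing point $p$ with $a,b\notin B$, then both $\gamma\cap B$ and $\alpha\cap B$ are arcs with endpoints on $\partial B$, and since they cross once transversally these four boundary points alternate around $\partial B$. Any arc in $\overline{B}$ joining the two $\gamma$-endpoints must therefore meet $\alpha\cap B$ an odd number of times, so no $\gamma'$ obtained by modifying $\gamma$ only inside $B$ (with the same endpoints on $\partial B$) can be disjoint from $\alpha$. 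Consequently your $\tilde g$ is not analytic on a full neighbourhood of $\alpha$, and moreover the bubble $\gamma'\gamma^{-1}\subset B$ has winding number $0$ about $b$, not $\pm 1$; your jump computation then yields $0$ instead of $\phi(b)$.

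The fix is precisely what the paper does: the deformation cannot be local near $p$ but must push $\gamma$ around the \emph{entire} sub-arc $\alpha_{1}$ of $\alpha$ from the crossing point to $b$, so that $\gamma'$ stays on the $a$-side of $\alpha$ throughout. The resulting closed loop then genuinely encloses $b$ (and not $a$), and your residue computation goes through to give $g^{+}(b)-g(b)=\phi(b)$. With this one correction the rest of your outline---including the local-push argument for extending $g$ to $\gamma$ from either side, where no jump needs to be captured and a small deformation suffices---is correct and coincides with the paper's proof.
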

\begin{figure}[htb]
    \includegraphics[scale = 0.35]{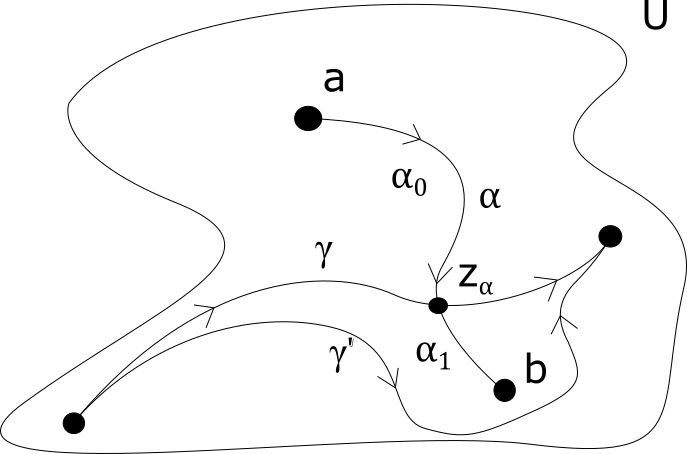}
    \centering
    \caption{Cauchy--Heine transform.}
\end{figure}

\begin{proof}
Let us start by noting that $g$ is analytic by combining the classic Theorems of Morera (\cite[p. 122]{ahlfors1966complex}) and Fubini and noting that the integrand for fixed $\zeta$ is analytic. In addition, let us note that by saying that $\gamma$ and $\alpha$ are smooth we mean that it has a smooth unit speed parametrization from some interval.

Suppose $\alpha$ intersects $\gamma$ in $z_{\alpha}$, this splits $\alpha$ into two parts, $\alpha_{0}$, the part before $z_{\alpha}$ and $\alpha_{1}$, the part after $z_{\alpha}$. If we want to analytically continue from $a$ to $b$ along $\alpha$, then we homotopically deform $\gamma$ `around $\alpha_{1}$, always keeping $\alpha_{1}$ to its left'. This is possible because $\gamma$ is smooth and $\alpha$ intersects $\gamma$ only once. Hence, we conclude that at any point of $\alpha_{1}$ there is an open not containing any points of $\gamma$, where we can deform $\gamma$ to wrap around $\alpha_{1}$ as described.

Call this new curve $\gamma'$, suppose $\gamma'$ deviates from $\gamma$ at $z_{0}$ close to $z_{\alpha}$ and rejoins $\gamma$ at $z_{1}$, on the other side of $z_{\alpha}$.

Note that, by construction, for any point $z$ on $\alpha_{0}$ we have:
\begin{equation*}
    \frac{1}{2\pi i}\int_{\gamma}\frac{\phi(\zeta)}{z - \zeta}d\zeta = \frac{1}{2\pi i}\int_{\gamma'}\frac{\phi(\zeta)}{z - \zeta}d\zeta ,
\end{equation*}
and because $\alpha_{1}$ touches $\gamma'$ nowhere by construction, the following function:
\begin{equation*}
    g^{+}(z) \coloneqq \frac{1}{2\pi i}\int_{\gamma'}\frac{\phi(\zeta)}{z - \zeta}d\zeta
\end{equation*}
remains analytic along $\alpha$ for the same reason as $g$, so $g^{+}(b)$ satisfies the demands.

Then:
\begin{equation*}
    g^{+}(b) - g(b) = \frac{1}{2\pi i}\int_{\gamma'}\frac{\phi(\zeta)}{b - \zeta}d\zeta - \frac{1}{2\pi i}\int_{\gamma}\frac{\phi(\zeta)}{b - \zeta}d\zeta .
\end{equation*}
This is just the contour integral of $\frac{\phi(\zeta)}{b - \zeta}$ from $z_{0}$ to $z_{1}$ following $\gamma'$ and then back from $z_{1}$ to $z_{0}$ along $\gamma$. By construction this curve goes counter-clockwise, contains $b$ in its interior, and by assumption $\phi$ has no singularities in $U$, so by Cauchy's Theorem we have:
\begin{equation*}
    g^{+}(b) - g(b) = \phi(b) .
\end{equation*}
The other cases are analogous. Note that in order to extend $g$ to $\gamma$, it suffices to pick a point on $\gamma$ and because $\gamma$ has no self-intersection and is smooth, it is possible to make a small curve perpendicular to $\gamma$, intersecting $\gamma$ only at the chosen point and one uses this small curve to analytically continue.
\end{proof}

Before proving Theorem \ref{Theorem triv cocycle}, we still need a second auxiliary result to treat the singular points. That said, we remark that this Lemma will be used a lot independently later.

\begin{Lemma}\label{Lemma Unif CH estimates}
Let $f$ be an extendable cochain for a regular partition $\Xi$ and a regular set of generalized $\epsilon$-neighbourhoods, let $f$ be $\epsilon$-extendable and let $\zeta_{0}$ be a point in $U$ in the partition of $f$. Suppose that:
\begin{enumerate}
    \item{the integral:
    \begin{equation*}
        \frac{1}{2}\sum_{(U, V) \in \Xi^{2}}\frac{1}{2\pi}\int_{\partial \Xi}|\delta_{U, V}f(\tau)|d\tau < \infty\,.
    \end{equation*}
    }
    \item{We have some $d > 0$, $d \leq d_{\Xi}(\epsilon)$ such that $|\delta f|$ is bounded on $\overline{B(\zeta_{0}, d)}$ in the sense that it is bounded by the same constant over all choices of two elements of the partition.}
    \item{Let $L < \infty$ be the sum of for each regular piece of boundary both entering and exiting $\overline{B(\zeta_{0}, d)}$ the difference between entering and leaving angle, in radians.}
    \item{Suppose that any regular piece of boundary entering $\overline{B(\zeta_{0}, d)}$ also leaves; i.e., $\overline{B(\zeta_{0}, d)}$ contains no singular points.}
\end{enumerate}
Then the following inequality takes place:
\begin{equation*}
    \left|\mathfrak{C}(f)(\zeta_{0})\right| \leq \frac{\frac{1}{2}\sum_{(U, V) \in \Xi^{2}}\frac{1}{2\pi}\int_{\partial \Xi}|\delta_{U, V}f(\tau)|d\tau+ L\sup_{\zeta_{1} \in \overline{B(\zeta_{0}, d)}}|\delta f(\zeta_{1})|}{2\pi d}\,.
\end{equation*}
In particular, should you have an $W \subset \Omega^{f}$ such that:
\begin{enumerate}
    \item{$|\delta f|$ is bounded on $\overline{B(W, d)}$ (the set of all points at distance $\leq d$ from $W$).}
    \item{We have a uniform bound $L$ of the lengths $L$ for all points. And $\overline{B(W, d)}$ still contains no singular points.}
\end{enumerate}
Then:
\begin{equation*}
    \sup_{\zeta_{0} \in W}\left|\mathfrak{C}(f)(\zeta_{0})\right| \leq \frac{\frac{1}{2}\sum_{(U, V) \in \Xi^{2}}\frac{1}{2\pi}\int_{\partial \Xi}|\delta_{U, V}f(\tau)|d\tau + L\sup_{\zeta_{1} \in \overline{B(W, d)}}|\delta f(\zeta_{1})|}{2\pi d}\,.
\end{equation*}
That is, the previous estimate becomes uniform, including on the boundary and can `cross the boundary'.
\end{Lemma}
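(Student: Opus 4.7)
The plan is to split the Cauchy--Heine integrals over $\partial\Xi$ into a ``far'' piece lying outside $\overline{B(\zeta_{0},d)}$ and a ``near'' piece lying inside, and to bound each separately. For the far piece, the pointwise bound $|\tau-\zeta_{0}|^{-1}\leq d^{-1}$ is immediate, so after pulling $1/d$ out and applying the triangle inequality one obtains a contribution bounded by $(1/d)\cdot\tfrac{1}{2}\sum_{(U,V)}\tfrac{1}{2\pi}\int_{\partial\Xi}|\delta_{U,V}f||d\tau|$; by hypothesis (1) this is finite and accounts for the first summand of the stated estimate.

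For the near piece, the key observation is that hypotheses (2) and (4) together force $\delta_{U,V}f$ to extend as an analytic function across a neighborhood of the whole ball---the ball sits in $U_{\epsilon}\cap V_{\epsilon}$ thanks to $d\leq d_{\Xi}(\epsilon)$ and the uniform boundedness assumption---and that each regular piece of $\partial\Xi$ entering the ball must also exit it, crossing as a single smooth arc between two points of the circle $\partial B(\zeta_{0},d)$. Cauchy's theorem then permits deforming each such boundary arc, within the ball, onto an arc of $\partial B(\zeta_{0},d)$ joining the same endpoints; one selects the arc on the appropriate side so that the closed loop formed does not enclose the pole $\zeta_{0}$, whence no residue is picked up. On this circular arc, parametrizing $\tau=\zeta_{0}+de^{i\theta}$ turns $d\tau/(\tau-\zeta_{0})$ into $i\,d\theta$, so $\left|\int_{\mathrm{arc}}\delta_{U,V}f(\tau)/(\tau-\zeta_{0})\,d\tau\right|\leq \sup_{\overline{B(\zeta_{0},d)}}|\delta f|\cdot|\theta_{1}-\theta_{0}|$. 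Summing the angular spreads over all such boundary pieces (the factor $\tfrac{1}{2}\sum_{(U,V)}$ collapsing ordered pairs into physical boundary components) yields precisely the $L\sup|\delta f|$ term.

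For the uniform version over $W$, the same argument applies at each $\zeta_{0}\in W$: both the far estimate and the near deformation only use hypotheses assumed uniformly over $W$---bounded $|\delta f|$ on $\overline{B(W,d)}$, uniform length sum $L$, and absence of singular points inside the enlarged ball---so taking $\sup_{\zeta_{0}\in W}$ preserves the form of the inequality. The ``crossing the boundary'' assertion is automatic because the estimate never involves the value of $f$ at $\zeta_{0}$ itself, only the coboundary on arcs away from it. The main obstacle I expect is the deformation step: one must verify carefully that the homotopy between the original arc and the circular arc lies inside the common analytic domain $U_{\epsilon}\cap V_{\epsilon}$, and that the correct side of the deformation is chosen so that the pole at $\zeta_{0}$ is not enclosed. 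The regularity of $\Xi$, the choice $d\leq d_{\Xi}(\epsilon)$, and hypothesis (4) are precisely what make this clean.
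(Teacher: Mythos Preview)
Your proposal is correct and follows essentially the same route as the paper: split the Cauchy--Heine integral into the far piece $|\tau-\zeta_{0}|>d$ (bounded trivially via $|\tau-\zeta_{0}|^{-1}\le d^{-1}$) and the near piece inside $\overline{B(\zeta_{0},d)}$, then deform each near boundary arc onto an arc of $\partial B(\zeta_{0},d)$ to obtain the $L\sup|\delta f|$ contribution. Your account of the deformation and of the uniform extension over $W$ is in fact more explicit than the paper's, which dispatches the near piece in one sentence and the uniform statement by an appeal to continuity.
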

\begin{proof}
Note that the second statement follows from the first by continuity.

For the first statement, let us recall the definition of a Cauchy--Heine transform:
\begin{equation*}
    \mathfrak{C}_{A}(f)(\zeta_{0}) \coloneqq \frac{1}{2}\sum_{(U, V) \in \Xi^{2}}\frac{1}{2\pi i}\int_{A}\frac{\delta_{U, V}f(\tau)}{\tau - \zeta}d\tau\,.
\end{equation*}
We get our estimate by splitting the integral into two parts, the part where $|\tau - \zeta_{0}| > d$ and the part where $|\tau - \zeta_{0}| \leq d$. Let $V_{1}$ denote the first part and $V_{2}$ the second, then:
\begin{align*}
    \left|\frac{1}{2}\sum_{(U, V) \in \Xi^{2}}\frac{1}{2\pi i}\int_{V_{1}}\frac{\delta_{U, V}f(\tau)}{\tau - \zeta}d\tau\right|
    &\leq \frac{1}{2}\sum_{(U, V) \in \Xi^{2}} \frac{1}{2\pi }\int_{V_{1}}\frac{|\delta_{U, V} f(\tau)|}{d}d\tau \\
    &\leq \frac{1}{2}\sum_{(U, V) \in \Xi^{2}} \frac{1}{2\pi }\frac{\int_{\partial \Xi^{f}}|\delta_{U, V} f(\zeta)|d\zeta}{2\pi d}\,.
\end{align*}

As for $V_{2}$, it is easy to estimate the parts both entering and leaving each region by deforming the path to the boundary resulting in:
\begin{equation*}
    \frac{L\sup_{\zeta_{1} \in \overline{B(\zeta_{0}, d)}}|\delta f(\zeta_{1})|}{2\pi d}\,.
\end{equation*}
\end{proof}

With these results in our hands, we can now prove Theorem \ref{Theorem triv cocycle}:

\begin{proof}[Proof of Theorem \ref{Theorem triv cocycle}]
All we really still need to prove is about the singular points. Let $s$ be a singular point in $\overline{A}$ inside $B$, then note that $s$ is an isolated singularity of $f - \mathfrak{C}_{A}(f)$, we can then use the estimate of Lemma \ref{Lemma Unif CH estimates} on small circles of radius $2d$ around $s$, denoted $S_{2d}$. Then because there are only a finite amount of lines of regular points going out of $s$ eventually we get an estimate for $|f - \mathfrak{C}_{A}(f)|$ of the following form:
\begin{equation*}
    \sup |f| + \left|\frac{ \frac{1}{2}\sum_{(U, V) \in \Xi^{2}}\frac{1}{2\pi}\int_{\partial \Xi}|\delta_{U, V}f(\tau)|d\tau + L\sup_{\zeta_{1} \in \overline{B(S_{2d}, d)}}|\delta f(\zeta_{1})|}{2\pi d}\right| .
\end{equation*}
But this implies that
\begin{equation*}
    \lim_{z \to s}(z - s)^{2}(f - \mathfrak{C}_{A}(f)) = 0\,.
\end{equation*}
So, by Riemann's Theorem; cf., \cite[4.3.1 Theorem 7]{ahlfors1966complex}, we get that $f - \mathfrak{C}_{A}(f)$ has at worst a simple pole at $s$. That is to say, that $\lim_{z \to s}(z - s)(f - \mathfrak{C}_{A}(f))$ exists. Suppose that it does have a simple pole at $s$, then this limit should be non-zero. But a simple calculation through Taylor series shows that the integral:
\begin{equation*}
    \int\frac{\delta(f)(w)}{w - z}dw
\end{equation*}
behaves roughly as a logarithm times $\delta(f)(z)$ when $w \to z$, so we are roughly looking at $z\ln(z)$ around the origin, which goes to zero unless $z$ spirals around the origin fast enough. But, by regularity, we assumed finite variation of argument for the lines along which we integrate, so we can just approach the singular point radially and have $\lim_{z \to s}(z - s)(f - \mathfrak{C}_{A}(f)) = 0$. Thus, once again Riemann's Theorem gets us where we want to be.
\end{proof}

From this and the classical Phragm\'en--Lindel\"{o}f (cf., Theorem \ref{Thm:classical P-L} below), one quickly gets a form of maximum modulus principle for cochains:

\begin{Corollary}\label{Cor triv of cocycle}
Let $f$ be a bounded cochain satisfying all conditions of Theorem \ref{Theorem triv cocycle} with $A$ being all the regular points of $\Xi$. Suppose $\Omega$ is biholomorphic with $\mathbb{C}^{+}$ through $\rho\colon \mathbb{C}^{+}\to \Omega$, mapping the boundary of $\Omega$ to the imaginary axis. Suppose $f$ and $\mathfrak{C}(f)$ are bounded on $\Omega$. Then:
\begin{equation*}
    \sup_{\Omega}|f| \leq \sup_{\partial \Omega}|f - \mathfrak{C}(f)| + \sup_{\Omega}|\mathfrak{C}(f)| \leq \sup_{\partial \Omega}|f| + 2\sup_{\Omega}|\mathfrak{C}(f)|\,.
\end{equation*}
\end{Corollary}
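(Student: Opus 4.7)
The plan is to reduce the corollary to the classical Phragmén--Lindelöf principle applied to the single analytic function $f - \mathfrak{C}(f)$ on $\Omega$, and then convert back to $f$ via the triangle inequality.

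First I would observe that, by the hypothesis on $A$ being all regular points of $\Xi$ combined with Theorem \ref{Theorem triv cocycle} applied with $B = \Omega$, the difference $f - \mathfrak{C}(f)$ is not merely a cochain but an honest analytic function on the whole total domain $\Omega$ (including at the singular points of $\Xi$, where the theorem explicitly removes the removable singularities). Since $f$ and $\mathfrak{C}(f)$ are assumed bounded on $\Omega$, so is $f - \mathfrak{C}(f)$. Pulling back via the biholomorphism $\rho \colon \mathbb{C}^{+} \to \Omega$ then gives a bounded analytic function $(f - \mathfrak{C}(f)) \circ \rho$ on the right half-plane, with $\partial \mathbb{C}^{+}$ corresponding to $\partial \Omega$.

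Next, I would apply the classical Phragmén--Lindelöf principle (Theorem \ref{Thm:classical P-L}) to this pulled-back function. Since it is bounded on $\mathbb{C}^{+}$, the theorem gives
\begin{equation*}
    \sup_{\Omega}|f - \mathfrak{C}(f)| \leq \sup_{\partial\Omega}|f - \mathfrak{C}(f)|\,,
\end{equation*}
where boundary values are interpreted in the usual $\limsup$ sense. From here the first inequality of the corollary is immediate from the triangle inequality
\begin{equation*}
    |f| \leq |f - \mathfrak{C}(f)| + |\mathfrak{C}(f)|\,,
\end{equation*}
taking sup over $\Omega$ and using the Phragmén--Lindelöf bound on the first summand.

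Finally, for the second inequality I would apply the triangle inequality on the boundary,
\begin{equation*}
    \sup_{\partial \Omega}|f - \mathfrak{C}(f)| \leq \sup_{\partial\Omega}|f| + \sup_{\partial\Omega}|\mathfrak{C}(f)| \leq \sup_{\partial\Omega}|f| + \sup_{\Omega}|\mathfrak{C}(f)|\,,
\end{equation*}
and add $\sup_{\Omega}|\mathfrak{C}(f)|$ to both sides. The only genuine subtlety I anticipate is the usual one when invoking Phragmén--Lindelöf in a non-canonical domain: one needs to be sure that $f - \mathfrak{C}(f)$ really is single-valued and analytic across the singular points of the partition, not just on the regular part, so that the function indeed pulls back to an honest analytic function on $\mathbb{C}^{+}$. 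This is precisely what Theorem \ref{Theorem triv cocycle} guarantees, so no further work should be needed beyond verifying that its hypotheses have been inherited from those of the corollary.
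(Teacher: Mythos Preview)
Your proposal is correct and matches exactly the approach the paper indicates: the paper states the corollary immediately after Theorem~\ref{Theorem triv cocycle} with the single sentence ``From this and the classical Phragm\'en--Lindel\"{o}f\ldots one quickly gets a form of maximum modulus principle for cochains,'' and your argument---use Theorem~\ref{Theorem triv cocycle} to make $f-\mathfrak{C}(f)$ globally analytic, apply Phragm\'en--Lindel\"{o}f to its pullback to $\mathbb{C}^{+}$, then two triangle inequalities---is precisely that quick derivation.
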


\begin{Remark}
This Corollary obviously also works when one cuts out a compact part of $\mathbb{C}^{+}$.
\end{Remark}

Combining this with Lemma \ref{Lemma Unif CH estimates} when we have a uniformly regular partition we get the estimate:

\begin{Corollary}\label{CorFinEstCH}
Let $f$ be a bounded cochain satisfying all conditions of Theorem \ref{Theorem triv cocycle} with $A$ being all the regular points of $\Xi$ and $\Xi$ being uniformly regular. Suppose $\Omega$ is biholomorphic with $\mathbb{C}^{+}$ through $\rho\colon \mathbb{C}^{+}\to \Omega$, mapping the boundary of $\Omega$ to the imaginary axis. Then there exists a constant $C$ depending only on the partition (that is, how well we can control the estimate of Lemma \ref{Lemma Unif CH estimates} for singular points using Jordan curves around them) such that:
\begin{equation*}
    \sup_{\Omega}|f| \leq \sup_{\partial \Omega}|f| + C\left(\int_{\partial \Xi}|\delta f(\zeta)|d\zeta + \sup_{\zeta_{1} \in \Omega}|\delta f(\zeta_{1})|\right).
\end{equation*}
\end{Corollary}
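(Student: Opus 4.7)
The plan is to combine Corollary \ref{Cor triv of cocycle} with Lemma \ref{Lemma Unif CH estimates}, using uniform regularity of $\Xi$ to upgrade the pointwise estimate of the Lemma to a uniform one on all of $\Omega$. Since Corollary \ref{Cor triv of cocycle} already yields $\sup_\Omega |f| \leq \sup_{\partial\Omega}|f| + 2\sup_\Omega|\mathfrak{C}(f)|$, it suffices to produce a constant $C_0$, depending only on $\Xi$, such that $\sup_\Omega|\mathfrak{C}(f)| \leq C_0 \bigl(\int_{\partial\Xi}|\delta f|\,d\zeta + \sup_\Omega|\delta f|\bigr)$; one then takes $C = 2C_0$.

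Let $\delta_0$ denote the uniformity constant of $\Xi$, let $n$ be its multiplicity, and set $d_0 := \min(\delta_0/3,\, d_\Xi(\epsilon))$. The easy case is $\zeta_0 \in \Omega$ lying at distance at least $d_0$ from every singular point: then $\overline{B(\zeta_0, d_0)}$ meets no singular point and Lemma \ref{Lemma Unif CH estimates} applies directly with $d = d_0$. The multiplicity bound forces the angular quantity $L$ to satisfy $L \leq 2\pi n$, giving
\[
    |\mathfrak{C}(f)(\zeta_0)| \leq \frac{1}{2\pi d_0}\Bigl(\int_{\partial\Xi}|\delta f(\tau)|\,d\tau + 2\pi n \sup_\Omega |\delta f|\Bigr),
\]
with all constants depending only on $\Xi$.

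The remaining case, and the main obstacle, is when $\zeta_0$ lies within distance $d_0$ of some singular point $s$; this is precisely what the statement calls `the estimate via Jordan curves around singular points'. I would enclose $s$ by the circle $\gamma_s$ of radius $\delta_0/2$, which by uniform regularity sits at distance at least $\delta_0/2$ from every other singular point and at distance at least $\delta_0/6$ from $\zeta_0$, and then rerun the proof of Lemma \ref{Lemma Unif CH estimates} with $\gamma_s$ playing the role of $\partial B(\zeta_0, d)$. The portion of the Cauchy--Heine integral over $\partial\Xi$ outside $\gamma_s$ is controlled exactly as in the original proof and is bounded by $(3/\pi\delta_0)\int_{\partial\Xi}|\delta f|\,d\tau$. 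The portion inside $\gamma_s$ is supported on at most $n$ regular arcs emanating from $s$, each of length at most $\delta_0/2$, and the integrals along these arcs must be estimated uniformly in $\zeta_0$ even when $\zeta_0$ lies very close to one of them. The finite variation of argument built into the definition of a regular partition bounds the angular spread of each arc near $s$, which is the geometric input needed to produce an estimate of the form $C(s)\sup_\Omega |\delta f|$, where $C(s)$ depends only on the local structure of $\Xi$ at $s$ and, by uniform regularity (which bounds $n$ and the angular variation uniformly over singular points), admits a uniform upper bound. Combining the two cases gives the required $C_0$, and the claim follows.
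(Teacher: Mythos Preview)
Your first case (points at distance $\geq d_0$ from every singular point) is correct and matches the paper. The gap is in the second case. Your plan is to bound the Cauchy--Heine integral directly at a point $\zeta_0$ close to a singular point $s$, treating the arcs emanating from $s$ one at a time and invoking finite variation of argument. This does not work: for a single arc $\gamma$ terminating at $s$ and $\zeta_0$ at distance $r$ from $s$, the integral $\int_\gamma \frac{\delta f(\tau)}{\tau-\zeta_0}\,d\tau$ is typically of size $|\delta f|\cdot\log(1/r)$, independently of the angular behaviour of $\gamma$. Concretely, with $s=0$, a radial arc $\tau=te^{i\theta}$, $t\in[0,\tfrac{\delta_0}{2}]$, and $\zeta_0=re^{i\theta'}$ with $\theta'\neq\theta$, one has $|\tau-\zeta_0|\leq t+r$, so $\int_0^{\delta_0/2}|\tau-\zeta_0|^{-1}\,dt\gtrsim\log(1/r)$. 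Finite variation of argument only rules out spiralling of the arc into $s$; it does nothing about this divergence. Nor can you rerun the deformation step of Lemma~\ref{Lemma Unif CH estimates} with $\gamma_s$ in place of $\partial B(\zeta_0,d)$: that step requires each arc to both enter \emph{and leave} the region, whereas the arcs here have one endpoint pinned at $s$ inside $\gamma_s$.

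What you are missing is that the logarithmic contributions from the several arcs meeting at $s$ \emph{cancel}, and the way to exploit this without tracking the cancellation by hand is to use the conclusion of Theorem~\ref{Theorem triv cocycle} itself. Since $f-\mathfrak{C}(f)$ is analytic across $s$ and, for any $U\in\Xi$ bordering $s$, the function $f_U$ is analytic on $U_\epsilon\ni s$, the component $\mathfrak{C}(f)_U=f_U-(f-\mathfrak{C}(f))$ extends to a single-valued analytic function on a full disc around $s$. The maximum modulus principle then bounds $|\mathfrak{C}(f)_U|$ on that disc by its supremum on a circle $S$ of \emph{fixed} radius around $s$ (the same radius for every singular point, by uniform regularity). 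Points of $S$ lie at fixed positive distance from every singular point, so the proof of Lemma~\ref{Lemma Unif CH estimates} applies there with $d$ depending only on the partition, yielding the uniform constant $C$. This maximum-modulus reduction is precisely the ``Jordan curves around singular points'' manoeuvre the statement alludes to, and it is the step your argument lacks.
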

\begin{proof}
As soon as we know $f - \mathfrak{C}(f)$ is analytic we know that every component of $\mathfrak{C}(f)$ is analytic even extended past a singular point, so we can use maximum modulus Theorem on a small circle around the singular point in the boundary inside the $\epsilon$-neighbourhood of a chosen component, by uniform regularity it is possible to take the size of this circle the same for all singular points, resulting in a uniform estimate using Lemma \ref{Lemma Unif CH estimates}.

Perhaps more clearly we pick an open set $U$ bordering our singular point $s$, then $U_{\epsilon}$ contains $s$ in its interior and we can analytically continue $\mathfrak{C}(f)_{U}$ around $s$ in a small circle $S$, because $f - \mathfrak{C}(f)$ is fully analytic and $f_{U}$ is analytic up to $U_{\epsilon}$ we know that analytically continuing $\mathfrak{C}(f)_{U}$ around $s$ does not create a branch cut, but rather comes back to our original function. Then we apply maximum modulus Theorem to estimate the modulus of $\mathfrak{C}(f)_{U}(s)$ by its value on $S$, which can be bounded in the same way as Lemma \ref{Lemma Unif CH estimates}, by exactly the same proof. By regularity of the partition, we can take the same $S$ for all $U$ bordering on $s$; and, by uniform regularity, we can take circles of the same radius for all singular points, fixing all constants in Lemma \ref{Lemma Unif CH estimates}.
\end{proof}

\section{Phragm\'en--Lindel\"{o}f principle}
Before we move on to what we will refer to as Phragm\'en--Lindel\"{o}f for regular cochains, we will need an improvement to classical Phragm\'en--Lindel\"{o}f principle; cf., \cite{PhragmenLindelof-article}. With this aim, next we recall the classical Phragm\'en--Lindel\"{o}f principle following Titchmarsh text \cite{theoryofFunctionsTitchmarsh}:

\begin{Theorem}[Classical Phragm\'en--Lindel\"{o}f, {\cite[\S 5.61]{theoryofFunctionsTitchmarsh}}]\label{Thm:classical P-L}
Let $f$ be an analytic function in an unbounded simply connected region $U$ inside a sector at infinity making angle $\frac{\pi}{\alpha}$ at infinity, including the boundary; i.e., $f$ is analytic on $\overline{U}$, and $\overline{U}$ is contained inside the closure of the sector.

Suppose that on $\partial U$:
\begin{equation}\label{Eq:PhragmenLindelof-inequality}
    |f(z)| \leq M .
\end{equation}
If there exists some $\beta < \alpha$ such that on $U$:
\begin{equation*}
    |f(z)| = O\left(e^{|z|^{\beta}}\right),
\end{equation*}
then Inequality \eqref{Eq:PhragmenLindelof-inequality} holds on the entirety of $U$.
\end{Theorem}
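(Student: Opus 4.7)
The plan is to reduce the statement to the standard symmetric sector and then run the classical comparison trick with an auxiliary exponential factor. First I would use a biholomorphic power map to move $U$ inside the canonical sector $\{z : |\arg z| < \pi/(2\alpha)\}$; since $U$ is simply connected, unbounded, and sits inside a sector of opening $\pi/\alpha$, a rotation plus the assumption already realize this up to a change of orientation, and any hypotheses on the growth and the boundary values transport without change. So without loss of generality assume $U \subset \{|\arg z| < \pi/(2\alpha)\}$, and $|f| \leq M$ on $\partial U$, while $|f(z)| = O(e^{|z|^\beta})$ uniformly on $U$.

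Next I would fix an exponent $\gamma$ strictly between $\beta$ and $\alpha$ and, for each $\epsilon > 0$, consider the auxiliary function
\begin{equation*}
    g_\epsilon(z) \coloneqq f(z)\, e^{-\epsilon z^\gamma},
\end{equation*}
where $z^\gamma$ is defined by the principal branch (well-defined since $|\arg z| < \pi/(2\alpha) < \pi/(2\gamma)$). Writing $z = re^{i\theta}$, we have
\begin{equation*}
    \re(z^\gamma) = r^\gamma \cos(\gamma\theta) \geq r^\gamma \cos(\gamma\pi/(2\alpha)) = \delta\, r^\gamma,
\end{equation*}
with $\delta > 0$ because $\gamma/\alpha < 1$. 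Therefore on $\partial U$ we have $|g_\epsilon(z)| \leq M e^{-\epsilon\delta r^\gamma} \leq M$, while inside $U$ we have $|g_\epsilon(z)| \leq C e^{r^\beta - \epsilon\delta r^\gamma}$, which is bounded (as $r \to \infty$, uniformly in $\theta$, since $\gamma > \beta$).

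Then I would apply the ordinary maximum modulus principle on the bounded region $U_R \coloneqq U \cap \{|z| < R\}$ for $R$ large. The boundary of $U_R$ consists of the part of $\partial U$ inside the disk, where $|g_\epsilon| \leq M$, together with the circular arc $U \cap \{|z|=R\}$, where the above interior bound gives $|g_\epsilon| \leq C e^{R^\beta - \epsilon\delta R^\gamma}$. For $R$ large enough (depending on $\epsilon$) this arc contribution is also $\leq M$, so by maximum modulus $|g_\epsilon| \leq M$ on $U_R$; letting $R \to \infty$ gives $|g_\epsilon| \leq M$ throughout $U$. Finally, fixing $z \in U$ and letting $\epsilon \to 0^+$ (so that $e^{-\epsilon z^\gamma} \to 1$) yields $|f(z)| \leq M$, as required.

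The only delicate point is verifying that the auxiliary arc really is absorbed by the exponential decay of $e^{-\epsilon\delta r^\gamma}$ against the growth bound $e^{r^\beta}$, which is precisely why the strict inequality $\beta < \alpha$ is needed to produce an admissible $\gamma$; this is the one step where something could fail if the hypotheses were weakened, but with $\beta < \alpha$ it goes through cleanly.
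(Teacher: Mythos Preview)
Your argument is correct and is exactly the classical auxiliary-function proof from Titchmarsh that the paper cites; the paper does not reprove the theorem but merely remarks that the same argument works verbatim for a region $U$ contained in the sector rather than the full sector, which is precisely what your maximum-modulus step on $U_R = U \cap \{|z| < R\}$ handles.
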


\begin{Remark}
The region $U$ was not in the original statement in \cite{theoryofFunctionsTitchmarsh}, but is clear that exactly the same proof works.
\end{Remark}

\begin{Definition}
Let $K$ be a compact subset of the right half-plane $\mathbb{C}^{+}$, and let $\gamma$ be a curve contained in $\mathbb{C}^{+}\setminus K$. We call $\gamma$ a \emph{meaningful dividing line} if $\mathbb{C}^{+}\setminus (K \cup \gamma)$ consists of two path connected components, each containing some sector with a positive angle.
\end{Definition}

A direct consequence of the classical Phragm\'en--Lindel\"{o}f, as stated in Theorem \ref{Thm:classical P-L}, is the following:

\begin{Proposition}
Let $K$ be a compact subset of the right half-plane $\mathbb{C}^{+}$, and let $\gamma$ be a meaningful dividing line in $\mathbb{C}^{+}\setminus K$. Let $f$ be an analytic function on $\overline{\mathbb{C}^{+}\setminus K}$, such that:
\begin{equation}\label{Eq:Refined-PL-Inequality}
    |f(z)| \leq M ,\quad\text{for any point}\quad z\in \gamma \cup \partial(\mathbb{C}^{+}\setminus K) .
\end{equation}
Suppose that there exists some $\lambda > 0$ such that on $\mathbb{C}^{+}\setminus K$:
\begin{equation*}
    |f(z)| \leq e^{\lambda |z|} ,
\end{equation*}
then Inequality \eqref{Eq:Refined-PL-Inequality} holds on $\mathbb{C}^{+}\setminus K$.
\end{Proposition}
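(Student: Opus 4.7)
The plan is to reduce to the classical Phragm\'en--Lindel\"of principle (Theorem \ref{Thm:classical P-L}), applied separately to each of the two components cut out by the dividing line $\gamma$. By the definition of a meaningful dividing line, the complement $\mathbb{C}^{+}\setminus(K\cup\gamma)$ decomposes into two path-connected components $U_{1}$ and $U_{2}$, and each $U_{i}$ contains an open sector at infinity of positive opening, say of opening at least $\eta_{i}>0$.

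The key geometric step is to observe that each $U_{i}$, outside a sufficiently large compact set, is contained in a sector of opening angle strictly less than $\pi$. Indeed, the complementary component $U_{j}$ (with $j\neq i$) already occupies a sector of opening $\eta_{j}>0$ at infinity; since the two components are disjoint, the angular directions $\arg(z)\in(-\pi/2,\pi/2)$ accessible to $U_{i}$ for $|z|$ large must avoid the interior of that sector, which confines $U_{i}$ at infinity to angular measure at most $\pi-\eta_{j}<\pi$. Enlarging the excluded disk to absorb $K$ together with the bounded portion of $\gamma$, one obtains each $U_{i}$ sitting inside a sector of opening $\pi/\alpha_{i}$ with $\alpha_{i}>1$.

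With this in hand, the hypotheses of Theorem \ref{Thm:classical P-L} on each $U_{i}$ are easy to verify: analyticity on $\overline{U_{i}}$ is inherited from analyticity on $\overline{\mathbb{C}^{+}\setminus K}$; the boundary $\partial U_{i}$ is contained in $\gamma\cup\partial(\mathbb{C}^{+}\setminus K)$, so the bound $|f|\leq M$ there is exactly the hypothesis; and the growth bound $|f(z)|\leq e^{\lambda|z|}$ gives $|f(z)|=O(e^{|z|^{\beta}})$ for $\beta=1<\alpha_{i}$. The classical principle then yields $|f|\leq M$ throughout $U_{i}$. Since $\mathbb{C}^{+}\setminus K = U_{1}\cup U_{2}\cup\gamma$ and $|f|\leq M$ on $\gamma$ by hypothesis, the desired inequality holds on all of $\mathbb{C}^{+}\setminus K$.

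The main obstacle I anticipate is the geometric step, i.e., making rigorous the confinement of each $U_{i}$ to a sector of opening less than $\pi$: the ``positive angle'' clause in the definition of a meaningful dividing line is tailored precisely to supply this, but the curve $\gamma$ may approach infinity along a complicated angular limit set, so one must argue carefully using connectedness of $U_{i}$ and the fact that any continuous path inside $U_{i}$ cannot cross $\gamma$. A secondary subtlety is that Theorem \ref{Thm:classical P-L} is stated for simply connected regions: if $K$ is not simply connected, each $U_{i}$ may inherit ``holes'', and one might then apply the classical theorem to an exhausting family of simply connected subdomains $V\subset U_{i}$ containing a given $z_{0}$ and still reaching infinity inside a sector of opening $<\pi$, passing to the limit to recover the bound at $z_{0}$.
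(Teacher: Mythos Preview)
Your proposal is correct and follows exactly the paper's approach: the paper's entire proof is the single sentence ``It is enough to apply Theorem~\ref{Thm:classical P-L} on each of the path connected components of $\mathbb{C}^{+}\setminus (K \cup \gamma)$,'' and you have simply unpacked the details (the geometric confinement in a sector of opening $<\pi$, the boundary inclusion, and the growth check $\beta=1<\alpha_i$). The obstacles you flag---the careful angular confinement argument and the simple-connectedness hypothesis---are points the paper glosses over, so your proof is if anything more complete than the original.
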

\begin{proof}
It is enough to apply Theorem \ref{Thm:classical P-L} on each of the path connected components of $\mathbb{C}^{+}\setminus (K \cup \gamma)$.
\end{proof}

Once we have recalled the classical Phragm\'en--Lindel\"{o}f, we turn to the main result of this section; namely, its generalization for regular cochains.

\begin{Theorem}[Phragm\'en--Lindel\"{o}f for regular cochains]\label{ThmPLCoch}
Let $f$ be a bounded cochain satisfying all conditions of Corollary \ref{CorFinEstCH}. Let $\phi\colon \Omega \to \mathbb{C}^{+}\setminus K$ be a biholomorphism, $K$ a compact set. Suppose that the image of the real axis under $\phi$ is a meaningful dividing line.

Define the set $B_{a}$ as all $z$ in the boundary of $\Xi$ with real part $a$. Let $J\colon B_{a} \to \mathbb{R}$ be the map mapping a regular point $z$ to the following: locally there exists (if it does not exist, make $J(z)=\infty$, implies a need to repartition usually) a function $\gamma$ such that $\partial \Xi$ is given by the points $\gamma(t)$ with $\gamma(0) = z$ and $\re(\gamma(t)) = \re(z) + t$ then we define:
\begin{equation*}
    J(z) = |\gamma'(0)| .
\end{equation*}
If it is a singular point, $J(z) = 0$.

Let $L$ be an increasing positive $C^{1}$ function such that:
\begin{equation*}
    \sum_{z \in B_{a}}J(a) \leq L(a) .
\end{equation*}

Suppose that $M$ is some positive non-zero $C^{1}$ function such that:
\begin{equation*}
    \sup_{\re(z) \geq a}|\delta f| \leq M(a) .
\end{equation*}

Suppose that $\rho$ is some positive non-zero $C^{1}$ function with positive derivative such that:
\begin{equation*}
    \sup_{\re(z) = a}\re(\phi(z)) \leq \rho(a) .
\end{equation*}

Suppose that $\sigma$ is some positive continuous function such that:
\begin{equation*}
    \sigma(a) \leq \inf_{\re(z) \geq a}\re(\phi^{-1}(z)) .
\end{equation*}

Suppose that:

\begin{enumerate}
    \item{The cochain $f$ is bounded on $\overline{\Omega}$.}
    \item{The cochain $f \circ \phi^{-1}$ descends in absolute value faster than any exponential on the image of the real axis under $\phi$.}
    \item{The function:
    \begin{equation*}
        \lambda(a) \leq \inf_{b \geq \sigma(a)} - \frac{\frac{M'(b)}{M(b)} + \frac{L'(b)}{L(b)}}{\rho'(b)}
    \end{equation*}
    is eventually positive.}
    \item{Assume that for all $a$:
    \begin{equation*}
        I_{\rho}(a) \coloneqq \int_{\sigma(a)}^{+\infty}e^{-\delta(a)(\rho(s) - \rho(\sigma(a)))}ds < \infty .
    \end{equation*}
    }
\end{enumerate}

Then for any real function $\psi$ going to $+\infty$ and smaller than $\re(\phi)$ on the real axis and for any positive function $\tilde{\delta}$ eventually between the zero function and $\lambda$, we have for $x \in \mathbb{R}$ large enough a constant $C$ depending only on the partition such that:
\begin{equation*}
    \hspace{-1cm}|f(x)| \leq e^{-\left[\lambda(\psi(x))- \tilde{\delta}(\psi(x))\right](\re(\phi(x)) - \psi(x))}\left(\sup_{ \Omega}|f| + CM(\sigma(\psi(x)))\left[L(\sigma(\psi(x)))I_{\rho}(\psi(x)) +  1\right]\right) .
\end{equation*}
\end{Theorem}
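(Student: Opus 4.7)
The plan is to combine the Cauchy--Heine trivialization of Theorem \ref{Theorem triv cocycle} with the refined classical Phragm\'en--Lindel\"{o}f principle, after pushing forward to $\mathbb{C}^{+}\setminus K$ via $\phi$ and inserting an exponential weight. Fix $x \in \mathbb{R}$ large and set $a \coloneqq \psi(x)$, so $a < \re(\phi(x))$. Consider the subdomain $\Omega_{a} \coloneqq \{z \in \Omega : \re(z) \geq \sigma(a)\}$, which contains $x$ for $x$ large; by definition of $\sigma$, $\phi(\Omega_{a}) \subset \{w : \re(w) \geq a\}$. Let $A_{a}$ be the regular points of $\partial \Xi$ inside $\Omega_{a}$. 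Theorem \ref{Theorem triv cocycle} then provides an honest analytic function $h \coloneqq f - \mathfrak{C}_{A_{a}}(f)$ on $\Omega_{a}$.

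Next I bound the Cauchy--Heine transform uniformly on $\Omega_{a}$. Slicing $\partial \Xi \cap \Omega_{a}$ by real part, the hypotheses $|\delta f| \leq M(s)$ on $\re(z) = s$ together with $\sum J(z) \leq L(s)$ give
\begin{equation*}
    \int_{A_{a}}|\delta f|\,|d\tau| \leq \int_{\sigma(a)}^{+\infty} L(s)M(s)\,ds.
\end{equation*}
The defining condition on $\lambda$ rewrites as $(\ln(LM))'(s) \leq -\lambda(a)\rho'(s)$ for $s \geq \sigma(a)$, so
\begin{equation*}
    L(s)M(s) \leq L(\sigma(a))M(\sigma(a))\,e^{-\lambda(a)(\rho(s) - \rho(\sigma(a)))},
\end{equation*}
and integrating returns exactly $L(\sigma(a))M(\sigma(a)) I_{\rho}(a)$. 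Combining with Lemma \ref{Lemma Unif CH estimates} and Corollary \ref{CorFinEstCH} (whose constant encodes the uniform-regularity control near singular points) yields the uniform estimate
\begin{equation*}
    \sup_{\Omega_{a}}\bigl|\mathfrak{C}_{A_{a}}(f)\bigr| \leq C\,M(\sigma(a))\bigl[L(\sigma(a))I_{\rho}(a) + 1\bigr].
\end{equation*}

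The heart of the argument is to apply the refined Phragm\'en--Lindel\"{o}f Proposition to the analytic function $G(w) \coloneqq h(\phi^{-1}(w))\, e^{\mu(w - a)}$ on $\phi(\Omega_{a})$, with $\mu \coloneqq \lambda(a) - \tilde{\delta}(a) > 0$. The image of the real axis is, by hypothesis, a meaningful dividing line for $\phi(\Omega_{a})$. On the portions of $\partial(\phi(\Omega_{a}))$ contained in the imaginary axis or in the slice $\re(w) = a$, the factor $|e^{\mu(w-a)}| \leq 1$, so $|G|$ is controlled by $\sup_{\Omega}|f|$ plus the uniform bound on $\mathfrak{C}_{A_{a}}(f)$. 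On the dividing line, the super-exponential decay of $f \circ \phi^{-1}$ dominates the single-exponential weight in the piece of $h$ coming from $f$, while the piece coming from $\mathfrak{C}_{A_{a}}(f)$ is tamed by the slack $\tilde{\delta}$ between $\mu$ and the extremal rate $\lambda(a)$. In the interior $|G(w)| \leq \sup|h| \cdot e^{\mu \re(w)}$ is at most single-exponential, fitting the growth hypothesis of the refined Proposition applied separately on each of the two components cut by the dividing line. Absorbing constants, $|G| \leq S \coloneqq \sup_{\Omega}|f| + CM(\sigma(a))[L(\sigma(a))I_{\rho}(a) + 1]$ throughout $\phi(\Omega_{a})$. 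Evaluating at $w = \phi(x)$ gives $|h(x)| \leq S\,e^{-\mu(\re\phi(x) - a)}$, and writing $f = h + \mathfrak{C}_{A_{a}}(f)$, reabsorbing the $\mathfrak{C}_{A_{a}}(f)(x)$ term into $S$ with a fresh constant, and substituting $a = \psi(x)$ produces the claimed inequality.

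The main obstacle I anticipate is the balancing on the dividing line: the Cauchy--Heine piece of $G$ is only known to be uniformly bounded there, so the weight $e^{\mu \re(w)}$ would naively blow up in the $\phi$-image direction. Closing the argument requires trading the margin provided by $\tilde{\delta} < \lambda$ for a correspondingly slower decay in the integrability estimate underlying $I_{\rho}$, so that the joint behaviour of the weight and the coboundary integral remains inside the Phragm\'en--Lindel\"{o}f regime. Once this balancing is in place, the remaining steps, including the uniform treatment of singular points via Corollary \ref{CorFinEstCH}, are essentially bookkeeping.
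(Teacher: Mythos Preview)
Your overall strategy---introduce an exponential weight in $\phi$ and combine the Cauchy--Heine trivialization with the refined Phragm\'en--Lindel\"{o}f Proposition---matches the paper, but the order of the two operations is reversed, and this is precisely the obstacle you yourself flag. You take the Cauchy--Heine transform of the \emph{unweighted} cochain, obtaining only a uniform bound on $\mathfrak{C}_{A_a}(f)$, and then multiply $h=f-\mathfrak{C}_{A_a}(f)$ by $e^{\mu(\phi-a)}$. On the dividing line the $f$-piece is fine, but $\mathfrak{C}_{A_a}(f)\cdot e^{\mu(\phi-a)}$ is a merely bounded quantity times an unbounded weight; no ``trading of margin'' between $\tilde{\delta}$ and $\lambda$ rescues this, because your bound on $\mathfrak{C}_{A_a}(f)$ carries no decay in $\re\phi$ whatsoever. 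The Proposition therefore does not apply to your $G$.

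The paper weights the \emph{cochain} first, setting $f_a(z)\coloneqq e^{(\lambda(a)-\tilde{\delta}(a))(\phi(z)-a)}f(z)$ on $\Omega_a\coloneqq\phi^{-1}(\{\re w>a\})$, and applies Corollary~\ref{CorFinEstCH} directly to $f_a$. Since the weight is analytic, $\delta f_a = e^{(\lambda(a)-\tilde{\delta}(a))(\phi-a)}\,\delta f$, so the weight is absorbed into the coboundary integral \emph{before} one estimates: slicing by real part gives
\[
\int_{\partial\Xi\cap\Omega_a}|\delta f_a|\leq\int_{\sigma(a)}^{\infty}L(s)M(s)\,e^{(\lambda(a)-\tilde{\delta}(a))(\rho(s)-a)}\,ds,
\]
and the Gronwall step (using exactly the defining inequality for $\lambda$) shows the integrand is dominated by $L(\sigma(a))M(\sigma(a))e^{-\tilde{\delta}(a)(\rho(s)-\rho(\sigma(a)))}$; this is where the slack $\tilde{\delta}$ is actually spent, and integrating returns $L(\sigma(a))M(\sigma(a))I_\rho(a)$. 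Likewise $\sup_{\Omega_a}|\delta f_a|\leq M(\sigma(a))$. Now $\mathfrak{C}(f_a)$ is globally bounded, $f_a$ itself is bounded on the real axis (hypothesis~(2) kills the single-exponential weight), and on $\partial\Omega_a$ one has $|f_a|=|f|$; hence the refined Proposition applies to $f_a-\mathfrak{C}(f_a)$ and Corollary~\ref{CorFinEstCH} delivers $\sup_{\Omega_a}|f_a|$ directly. Dividing out the weight at $z=x$, $a=\psi(x)$ is then immediate---no separate reabsorption of a Cauchy--Heine term is needed. A minor further point: your containment $\phi(\{\re z\geq\sigma(a)\})\subset\{\re w\geq a\}$ is the wrong way round (the defining inequality for $\sigma$ gives only $\phi^{-1}(\{\re w\geq a\})\subset\{\re z\geq\sigma(a)\}$), which is why the paper takes $\Omega_a$ as the $\phi$-preimage instead.
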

\begin{proof}
Let us define for $a > 0$:
\begin{equation*}
    \Omega_{a} \coloneqq \phi^{-1}(\{\re(z) > a\})\,,\quad f_{a} \coloneqq e^{(\lambda(a) - \tilde{\delta}(a))(\phi(z) - a)}f(z) .
\end{equation*}
We are now interested in estimating $f_{a}$ on $\Omega_{a}$ using Corollary \ref{CorFinEstCH}. As one might notice, the intention of this setup is that $\sup_{\partial \Omega_{a}}|f_{a}| = \sup_{\partial\Omega_{a}}|f| \leq \sup_{\Omega}|f|$.

Let us start by estimating the relevant integral for $a$ large enough to make $\lambda(a)- \tilde{\delta}(a)$ positive (to keep notation light we will omit the sum over all $(U, V) \in \Xi^{2}$ and the subscripts for $\delta$):
\begin{equation*}
    \int_{\partial \Xi \cap \Omega_{a}}|\delta f_{a}(\zeta)|d\zeta \leq \int_{\sigma(a)}^{+\infty}L(s)M(s)e^{\left[\lambda(a)- \tilde{\delta}(a)\right](\rho(s) - a)}ds .
\end{equation*}
Denote the integrand by $I(s)$, we want to apply Gronwall's Lemma as follows: note that by definition of $\lambda$:
\begin{equation*}
    \frac{I'(s)}{I(s)} = \frac{L'(s)}{L(s)} + \frac{M'(s)}{M(s)} + (\lambda(a) - \tilde{\delta}(a))\rho'(s) \leq -\tilde{\delta}(a)\rho'(s) .
\end{equation*}
This implies that:
\begin{equation*}
    L(s)M(s)e^{\left[\lambda(a)- \tilde{\delta}(a)\right](s - a)} \leq L(\sigma(a))M(\sigma(a))e^{-\tilde{\delta}(a)(\rho(s) - \rho(\sigma(a)))} .
\end{equation*}
Thus:
\begin{equation*}
    \int_{\partial \Xi \cap \Omega_{a}}|\delta f_{a}(\zeta)|d\zeta \leq L(\sigma(a))M(\sigma(a))I_{\rho}(a) .
\end{equation*}

Now we can look at:
\begin{equation*}
    \sup_{\Omega_{a}}|\delta f_{a}|\,,
\end{equation*}
and note that:
\begin{equation*}
    \sup_{\re(z) = s}|\delta f_{a}| \leq \frac{I(s)}{L(s)}\,.
\end{equation*}
Recall that we have assumed $L$ to be increasing, moreover, it is clear that $I(s) \geq 0$ thus already having calculated that $I'(s) \leq -\tilde{\delta}(a)\rho'(s)I(s)$ we know that $I(s)$ at least does not increase, thus:
\begin{equation*}
    \sup_{\Omega_{a}}|\delta f_{a}| \leq M(\sigma(a)).
\end{equation*}
Note that because $f$ is bounded, the improved Phragm\'en--Lindel\"{o}f in the Proposition above still works for $f_{a}\circ \phi^{-1}$. This combines with Corollary \ref{CorFinEstCH} into:
\begin{equation*}
    \sup_{\Omega_{a}}\left|e^{\left[\lambda(a)- \tilde{\delta}(a)\right](\phi(z) - a)}f(z)\right| \leq \sup_{ \Omega}|f| + C\left[M(\sigma(a))L(\sigma(a))I_{\rho}(a) + M(\sigma(a))\right].
\end{equation*}
Dividing by the exponential on the left hand side and removing the supremum we get:
\begin{equation*}
    \left|f(z)\right| \leq \left|e^{-\left[\lambda(a)- \tilde{\delta}(a)\right](\phi(z) - a)}\right|\left(\sup_{ \Omega}|f| + CM(\sigma(a))\left[L(\sigma(a))I_{\rho}(a) + 1\right]\right).
\end{equation*}
This estimate holds for $f \in \Omega_{a}$ now certainly for $x \in \mathbb{R}$, $x \in \Omega_{\re(\phi(x))} \subset \Omega_{\psi(x)}$ so filling in $x$ for $z$ and $\psi(x)$ for $a$ we get the estimate we want.
\end{proof}

\section{Proving Theorem A}

\begin{Definition}
Define on the right half-plane $\mathbb{C}^{+}$ the so-called \emph{simple standard partition} $\Xi_{st}$, given by the lines:
\begin{equation*}
    \im(\zeta) = \left\{n\frac{3}{4}\pi \mid n \neq 0\right\}.
\end{equation*}
This is given the generalized $\epsilon$-neighbourhoods by `enlarging the strips vertically by $\epsilon$ on both sides'.

We denote for the rest of this section the element of $\Xi_{st}$ containing the real axis by $U_{st}$.
\end{Definition}

\begin{Definition}
A \emph{simple cochain} $f$ (of type $1$) is given by a cochain on some:
\begin{equation*}
    \mathbb{C}^{+}_{a} \coloneqq \{\zeta \in \mathbb{C} \mid \re(\zeta) > a\},
\end{equation*}
with partition $\Xi_{st}$ such that for some $C, C' > 0$, for all $\zeta \in \mathbb{C}^{+}_{a}$:
\begin{equation*}
    |\delta f(\zeta)| \leq Ce^{-C'e^{\re(\zeta)}} .
\end{equation*}
\end{Definition}

\begin{Definition}\label{DefredefNC}
An element of the class $\mathcal{NC}$ is a real analytic function which can be extended to a simple cochain such that there exists some series:
\begin{equation*}
    \zeta + \sum a_{n}e^{-n\zeta} ,
\end{equation*}
with real coefficients $a_{n}$, such that any finite sum $S_{N}$ up to some $N$ approximates $f$ uniformly up to accuracy $O(e^{-(n + 1)\zeta})$; i.e., there exists some $C > 0$, $\epsilon > 0$, and some $\xi_{0} > 0$, such that for all $U \in \Xi_{st}$, for all $\zeta \in U_{\epsilon}$ with $\re(\zeta) > \xi_{0}$:
\begin{equation*}
    |f_{\Pi}(\zeta) - S_{N}(\zeta)| < Ce^{-(n + 1)\re(\zeta)} .
\end{equation*}
By abuse of notation we will use $f$ to both refer to the real germ as well as a chosen and fixed extension to a simple cochain.
\end{Definition}

Before we prove Theorem A, let us prove two auxiliary Lemmas:

\begin{Lemma}\label{LemPLSimpCochain}
Let $f$ be a simple cochain which is real on the real axis, suppose that $f$ is smaller than any exponential on the real axis, then $f$ is identically zero on the real axis.
\end{Lemma}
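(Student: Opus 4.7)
The plan is to apply Theorem~\ref{ThmPLCoch} (Phragm\'en--Lindel\"{o}f for regular cochains) to $f$ on $\Omega = \mathbb{C}^{+}_{a}$, taking the biholomorphism $\phi(\zeta) = \zeta - a$ onto $\mathbb{C}^{+}$; the positive real axis is then a meaningful dividing line (after removing a small compact). The simple-cochain hypothesis $|\delta f(\zeta)| \leq Ce^{-C'e^{\re \zeta}}$ gives $M(b) = Ce^{-C'e^{b}}$, whence $M'(b)/M(b) = -C'e^{b}$; combined with $\rho(b) = b - a$, the decay rate in the theorem satisfies $\lambda(b) \geq c\, e^{b}$ for some $c > 0$, i.e., grows exponentially.

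Two preliminary issues must be overcome before invoking the theorem. First, the partition $\Xi_{st}$ has infinitely many horizontal lines, so $L(b) = \sum_{z \in B_b} J(z)$ is formally infinite; I would handle this by truncating to $\bigcup_{|k|\leq N} U_k$, applying Theorem~\ref{ThmPLCoch} on the truncated cochain, and passing $N \to \infty$, with the doubly-exponential decay of $\delta f$ ensuring uniform convergence. Second, Corollary~\ref{CorFinEstCH} requires $f$ to be a bounded cochain, a property not included in the definition of a simple cochain; I would derive the needed a priori sub-exponential bound by starting from the hypothesis on the real axis, applying classical Phragm\'en--Lindel\"{o}f on the extended strip $(U_{st})_{\epsilon}$, and then propagating the bound to all other strips via the recursion $f_{U_{k\pm 1}} = f_{U_{k}} + \delta f$, whose cumulative Stokes corrections $\sum_{k}|\delta f|$ converge thanks to the doubly-exponential decay.

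With these in place, Theorem~\ref{ThmPLCoch} applied with $\psi(x) = x/2$ produces
\[
    |f(x)| \leq e^{-c\, e^{x/2}\cdot (x/2)}\bigl(\sup_{\Omega}|f| + O(M(x/2))\bigr)
\]
for $x$ real and large, so $|f(x)|$ decays at least doubly exponentially on the real axis. Since $f_{U_{st}}$ is analytic and sub-exponentially bounded on the strip $(U_{st})_{\epsilon}$, with boundary trace on the real axis decaying faster than $e^{-ce^{x/2}x/2}$, a classical boundary uniqueness / quasi-analyticity result (Carleman's theorem, or equivalently Luzin--Privalov after conformally mapping $(U_{st})_{\epsilon}$ onto a half-plane) forces $f_{U_{st}} \equiv 0$ on the real axis, which is the conclusion.

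The main obstacle I expect is the second preliminary step: the definition of a simple cochain places no growth condition on the component functions $f_{U_{k}}$, and the hypothesis controls $|f|$ only on the real axis, so extracting a global sub-exponential bound via the strip-by-strip propagation—while keeping careful track of the cumulative Stokes corrections—is the delicate part of the argument. Once that is in hand, the exponential blow-up of $\lambda(b)$ coming from the doubly-exponential smallness of $\delta f$ does all the heavy lifting.
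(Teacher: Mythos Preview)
Your overall architecture---apply Theorem~\ref{ThmPLCoch} to extract doubly-exponential decay on the real axis, then feed that into a one-variable uniqueness argument on the central strip---matches the paper's. The execution differs at two points, and the first is a genuine gap.

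\textbf{Handling $L=\infty$.} You propose to truncate to $\bigcup_{|k|\le N}U_k$ and let $N\to\infty$. But once you truncate, the total domain becomes a horizontal strip of height $\sim N$, and the biholomorphism to $\mathbb{C}^{+}\setminus K$ required in Theorem~\ref{ThmPLCoch} is no longer the translation $\phi(\zeta)=\zeta-a$ you wrote down: it is an exponential map. This forces $\rho(b)\sim e^{cb/N}$ rather than $\rho(b)=b$, and all of the quantities $\lambda$, $I_\rho$, $\psi$ in the theorem must be recomputed; the hypothesis ``$f\circ\phi^{-1}$ is smaller than any exponential on the image of the real axis'' also changes meaning. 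Making the resulting estimate uniform in $N$ is far from automatic. The paper sidesteps this entirely: instead of truncating, it restricts $f$ to a standard quadratic domain $\Omega=\Psi(\mathbb{C}^{+})$, $\Psi(\zeta)=\zeta+\sqrt{\zeta+1}$. On such a domain only $O(a^{2})$ of the horizontal Stokes lines meet $\{\re\zeta=a\}$, so one may take $L(a)=C_{1}a^{2}$, while $\phi=\Psi^{-1}$ is near the identity, allowing $\rho(a)=\sigma(a)=a$. This is the key trick you are missing.

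\textbf{The final step.} After obtaining $|f(x)|\le Ce^{-C'xe^{x/2}}$ on the real axis, the paper does not invoke Carleman or Luzin--Privalov. It observes that $f_{U_{st}}$ is analytic on a strip of half-width exceeding $\pi/2$ about the real axis, so $w\mapsto f_{U_{st}}(2\ln w)$ is analytic on $\mathbb{C}^{+}$ minus a compact; on the positive real axis it is bounded by $Ce^{-2C'w\ln w}$, smaller than any exponential, and classical Phragm\'en--Lindel\"of yields zero. Your route via boundary uniqueness is defensible but imports outside machinery the paper avoids.

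Your concern about boundedness of $f$ is legitimate: the lemma as stated does not assume it, yet the paper's proof silently uses Corollary~\ref{CorFinEstCH}. In the paper's actual applications the simple cochains arise from $\mathcal{NC}$ and are automatically bounded, so this is more a looseness in the statement of the lemma than a defect in your plan.
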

\begin{proof}
By applying Phragm\'en--Lindel\"{o}f for cochains on the smaller total domain, say given by:
\begin{equation*}
    \Omega \coloneqq \Psi(\mathbb{C}^{+}),
\end{equation*}
with $\Psi(\zeta) = \zeta + \sqrt{\zeta + 1}$ (one of the standard quadratic domains of \cite[p. 22]{ilyashenkoFiniteness}), it is easy to check that the boundary of $\Omega$ is given by:
\begin{equation*}
    it + \sqrt{1 + it} = \sqrt{\frac{\sqrt{1 + t^{2}} + 1}{2}} + i\left(\sgn(t)\sqrt{\frac{\sqrt{1 + t^{2}} - 1}{2}} + t\right).
\end{equation*}
We can take $L(a) = C_{1}a^{2}$, $M(a) = C_{2}e^{-C_{3}e^{a}}$ (in the notation of Theorem \ref{ThmPLCoch} with $C_{1}, C_{2}$ some constants). Also clearly $\Psi^{-1}$ plays the role of the function $\phi$ in the notation of the Phragm\'en--Lindel\"{o}f Theorem. So we can take $\sigma(a) = a$.

Plus, certainly $\re(z) \leq \re(\phi^{-1}(z))$ on the imaginary axis, now $|\re(z) - \re(\phi^{-1}(z))| = o(z)$, thus by classical Phragm\'en--Lindel\"{o}f for harmonic functions; cf., \cite[4. Corollary]{AhlforsPL} we can take:
\begin{equation*}
    \rho(a) = a\,.
\end{equation*}
Then:
\begin{equation*}
    \inf_{b \geq a}-\left(-C_{3}e^{b} + \frac{2}{b}\right) = \inf_{b \geq a}C_{3}e^{b} - \frac{2}{b} = C_{3}e^{a} - \frac{2}{a} ,
\end{equation*}
which we can simply take as $\lambda(a)$. Take $\delta(a) = 1$. Then:
\begin{equation*}
    I_{\rho}(a) = \int_{a}^{\infty}e^{-(x - a)}dx = 1\,.
\end{equation*}
We can also take $\psi(x) = \frac{1}{2}x$ because from the fact that $\Psi$ is near identity on the real axis the same can be deduced for $\Psi^{-1}$. From this we also get for $x$ large enough $\re(\Psi^{-1}(x)) - \psi(x) > \frac{1}{3}x$. This results in the following estimate on the real axis:
\begin{equation*}
    |f(x)| \leq e^{-\left(C_{3}e^{\frac{1}{2}x} - \frac{4}{x} - 1\right)\frac{1}{3}x}\left(C_{4} + C_{5}\left(C_{3}e^{\frac{1}{2}x} - \frac{4}{x} - 1\right)\left[\frac{x^{2}}{4} + 1\right]\right).
\end{equation*}
Simplifying down to the important parts and renaming the constants we get:
\begin{equation*}
    |f(x)| \leq Ce^{-C'xe^{\frac{1}{2}x}}\,.
\end{equation*}
Consequently, we note that the component containing the real axis is defined on the strip with imaginary parts between $-\pi$ and $\pi$. So, the composition $f \circ m_{2} \circ \ln $, with $m_{2}$ being multiplication by $2$, is defined on $\mathbb{C}^{+}$ outside some compact set, and we have that:
\begin{equation*}
    |f(x)| \leq Ce^{-C'\ln(x)x},
\end{equation*}
which is smaller than any exponential so by Classical Phragm\'en--Lindel\"{o}f this is identically zero.
\end{proof}

\begin{Lemma}\label{Lemma:NC group struc}
$\mathcal{NC}$ forms a group under composition.
\end{Lemma}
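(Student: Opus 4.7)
The plan is to verify the three group axioms: (i) $\id \in \mathcal{NC}$, (ii) $\mathcal{NC}$ is closed under composition, and (iii) every $g \in \mathcal{NC}$ has an inverse in $\mathcal{NC}$. The identity $\zeta \mapsto \zeta$ is trivially in $\mathcal{NC}$ with the constant cochain, trivial series, and vanishing Stokes differences, so the real content is in (ii) and (iii).

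For (ii), take $f, g \in \mathcal{NC}$. Since $g - \id = O(e^{-\re\zeta})$ uniformly on each widened strip, for $\re\zeta$ large enough $g_U(U_{\epsilon/2}) \subset U_\epsilon$ for every $U \in \Xi_{st}$, so setting $(f \circ g)_U \coloneqq f_U \circ g_U$ defines a simple cochain on the same standard partition. Writing $f(w) = w + \sum a_m e^{-mw}$ and $g(\zeta) = \zeta + B(\zeta)$ with $B(\zeta) = \sum b_n e^{-n\zeta}$, one formally expands
\begin{equation*}
    f(g(\zeta)) = g(\zeta) + \sum_m a_m e^{-m\zeta} e^{-mB(\zeta)},
\end{equation*}
and then expands each $e^{-mB(\zeta)}$ as a power series in $B(\zeta)$; after collecting terms one obtains a unique series $\zeta + \sum c_n e^{-n\zeta}$ with real coefficients. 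The uniform approximation by finite partial sums follows from the approximation property for $f$ and $g$ separately, the mean value theorem, and Cauchy estimates applied to $f'$ on slightly smaller neighbourhoods. For the Stokes bound on adjacent strips write
\begin{equation*}
    (f \circ g)_U(\zeta) - (f \circ g)_{U'}(\zeta) = \bigl[f_U(g_U(\zeta)) - f_U(g_{U'}(\zeta))\bigr] + \bigl[f_U(g_{U'}(\zeta)) - f_{U'}(g_{U'}(\zeta))\bigr];
\end{equation*}
the first bracket is bounded by $\|f'_U\|_\infty \cdot |\delta g(\zeta)|$ and the second by the Stokes bound of $f$ evaluated at $g_{U'}(\zeta)$, where $\re g_{U'}(\zeta) \geq \re\zeta - 1$ preserves the required double-exponential decay.

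For (iii), given $g = \id + B \in \mathcal{NC}$, the estimate $B = O(e^{-\re\zeta})$ together with Cauchy estimates yields $g'_U \to 1$ uniformly on $U_{\epsilon/2}$, so $g_U$ is a biholomorphism onto an open set containing $U_{\epsilon/3}$ for $\re\zeta$ large. Hence $g^{-1}_U$ is defined on $U_{\epsilon/3}$ and satisfies $g^{-1}_U = \id + O(e^{-\re\zeta})$. Formally inverting $\zeta + \sum b_n e^{-n\zeta}$ by fixed-point iteration produces a real series of the same form for $g^{-1}$, and the uniform approximation by its partial sums is obtained by the same mean value argument as above. For the Stokes phenomenon of $g^{-1}$, use $g_U \circ g^{-1}_U = \id = g_{U'} \circ g^{-1}_{U'}$: setting $w = g^{-1}_U(\zeta)$, the mean value theorem applied to $g_{U'}$ gives
\begin{equation*}
    g^{-1}_U(\zeta) - g^{-1}_{U'}(\zeta) = \frac{g_{U'}(w) - g_U(w)}{g'_{U'}(\xi)},
\end{equation*}
and the combination of $g'_{U'} \approx 1$ with the Stokes bound of $g$ at $w \approx \zeta$ yields the required estimate $Ce^{-C'e^{\re\zeta}}$.

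The main obstacle is the bookkeeping of domains: at each step one must ensure that after composing or inverting the new cochain is still defined on an $\epsilon'$-widening of every strip of $\Xi_{st}$. This is handled by observing that the perturbations $g - \id$ and $g^{-1} - \id$ are super-exponentially small, hence negligible compared to any fixed $\epsilon > 0$ once $\re\zeta$ is large enough; the formal series manipulations and all Stokes estimates are then routine consequences of the mean value theorem and Cauchy estimates.
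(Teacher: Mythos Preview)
Your proof is essentially correct and, for closure under composition, follows the same line as the paper: both use that each $g_U$ maps $U_{\epsilon/2}$ into $U_\epsilon$ to keep the standard partition, and both bound $\delta(f\circ g)$ by splitting into a term controlled by $\delta g$ and a term controlled by $\delta f$ at a nearby point.

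For the inverse, your route differs from the paper's. The paper represents $f^{-1}(w_0)-w_0$ by a contour integral derived from Rouch\'e's theorem,
\[
(\id+\alpha)^{-1}(w_0)-w_0=\frac{1}{2\pi i}\int\frac{\alpha(z)(1+\alpha'(z))}{z+\alpha(z)-w_0}\,dz,
\]
and reads off both the asymptotic expansion and the Stokes bound from this single formula (together with a separate iteration argument for injectivity). You instead invoke the inverse function theorem directly via $g'_U\to 1$ and obtain the Stokes bound for $g^{-1}$ by comparing $g_U$ and $g_{U'}$ at $w=g_U^{-1}(\zeta)$. Your approach is more elementary and avoids contour integration; the paper's integral formula is more explicit and packages the estimates uniformly.

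One point to tighten: the displayed identity
\[
g^{-1}_U(\zeta)-g^{-1}_{U'}(\zeta)=\frac{g_{U'}(w)-g_U(w)}{g'_{U'}(\xi)}
\]
invokes the real mean value theorem, which is not valid for holomorphic maps. Replace it by the integral form: from $g_U(w_1)=g_{U'}(w_2)$ with $w_j=g^{-1}_{U_j}(\zeta)$ one gets
\[
(w_1-w_2)\int_0^1 g'_U\bigl(w_2+t(w_1-w_2)\bigr)\,dt = g_{U'}(w_2)-g_U(w_2),
\]
and since the integral is uniformly close to $1$ the desired bound $|w_1-w_2|\le C\,|\delta g(w_2)|\le C e^{-C'e^{\re\zeta}}$ follows. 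The same remark applies to your use of the mean value theorem in the composition step.
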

\begin{proof}
To see for the composition that both standard quadratic domains and Dulac series are preserved, see \cite[Lemma 24.33]{Ilyashenko08lectureson} (nothing is really changed by the cochain nature).

The argument that the partition can be preserved comes from its extendability to larger strips and the fact that the cochain is exponentially close to the identity, meaning that if $f_{1}, f_{2} \in \mathcal{NC}$, for $\re(\zeta)$ large enough, for any strip $\Pi$, $f_{1, \Pi}$ (the component of $\Pi$) will map $\Pi_{\epsilon/2}$ inside $\Pi_{\epsilon}$, which is the domain of $f_{2, \Pi}$, so we can just take the same partition by reducing the $\epsilon$ up to which we can extend to $\Pi_{\epsilon}$.

To show that $\mathcal{NC}$ is closed under inversion, we will use the following formula for the inverse function, derived from Rouch\'e's Theorem; cf., \cite[pp. 153--154]{ahlfors1966complex} (with thanks to Dmitry Novikov): Let $f$ be an invertible analytic function, note then that by Cauchy's Theorem:
\begin{equation*}
    f^{-1}(w_{0}) - w_{0} = \frac{1}{2\pi i}\int\frac{f^{-1}(w) - w}{w - w_{0}}dw .
\end{equation*}
Thus, making the substitution $w = f(z)$, we get:
\begin{equation*}
    f^{-1}(w_{0}) - w_{0} = \frac{1}{2\pi i}\int\frac{(z - f(z))f'(z)}{f(z) - w_{0}}dz .
\end{equation*}

Let us now take $\alpha \in \mathcal{NC}$ and consider $f = \id + \alpha$. We obtain:
\begin{equation*}
    (\id + \alpha)^{-1}(w_{0}) - w_{0} = \frac{1}{2\pi i}\int\frac{\alpha(z)(1 + \alpha'(z))}{z + \alpha(z) - w_{0}}dz,
\end{equation*}
noting that performing Cauchy estimates on circles of radius $e^{-\re(z)^{\frac{1}{2}}}$ will still preserve our domains and we get both the domain and the estimates we want. We only need to prove that $\id + \alpha$ is invertible for $\re(z)$ large enough, for the series expansion we refer to the formula in \cite[Lemma 6.23]{VdDriesCompTrans}.

Let $f(z) = z + \alpha(z)$, and let us then consider $\tilde{f}(z) = f(z + 2x_{0}) - 2x_{0}$, $\tilde{\alpha} = \alpha(z + 2x_{0})$. Note that by definition:
\begin{equation*}
    \tilde{f}(z) = z + \tilde{\alpha}(z) ,
\end{equation*}
but unlike with $\alpha$, $\tilde{\alpha}^{i}$; i.e., the $i$-fold composition, makes sense and we can explicitly give the inverse of $\tilde{f}$ as:
\begin{equation*}
    \tilde{f}^{-1}(w) = \sum_{i = 0}^{\infty}\tilde{\alpha}^{i}(w).
\end{equation*}
This implies that $z \mapsto \tilde{f}(z) - 2x_{0} = f(z + 2x_{0})$ will eventually be injective. Moreover, because $f'$ will remain non-zero by Cauchy estimates, we know that $f$ will eventually be injective on the type of domain that is necessary.
\end{proof}

Let us finally prove Theorem A; the main result of this manuscript. For this sake, we recall that $A(\,\_\,)$ means the conjugation by the exponential function; i.e., $A(g) \coloneqq \ln \circ g \circ \exp$, for any given function $g$, and we also recall that by $x\mapsto\exp^{n}(x)$ we mean the $n$-fold composition of the exponential.

\begin{proof}[Proof of Theorem A]
First of all, since in Lemma \ref{Lemma:NC group struc} we just proved that $\mathcal{NC}$ forms a group under composition, we are only looking at an element $g$ of:
\begin{equation*}
    \Aff \circ \mathcal{NC} \circ \cdots \circ A^{n}\mathcal{NC} \circ \cdots \circ \mathcal{NC} \circ \Aff.
\end{equation*}
We proceed by induction on $n$, being the base case $n=0$ obvious. Thus, let $n>0$ be any positive integer, let us take arbitrary affine elements $a, b \in \Aff$, and let $g$ be any element in:
\begin{equation*}
    a \circ \mathcal{NC} \circ \cdots \circ A^{n} \mathcal{NC}  \circ \cdots \circ \mathcal{NC} \circ b\,.
\end{equation*}
We stress that $g$ has a fixed-points free neighbourhood around zero if and only if $b \circ g \circ b^{-1}$ does. Hence, we may assume $b \equiv \id$. Consequently, we note then that the element $g$ can be rewritten as the sum of the affine term $a$ plus some exponentially small terms. But, if $a \not\equiv \id$, then that keeps $g$ away from the identity. Therefore, we may also assume $g$ to be in:
\begin{equation*}
    \mathcal{NC} \circ \cdots \circ A^{n}\mathcal{NC}  \circ \cdots \circ \mathcal{NC}.
\end{equation*}
Again by conjugation we may assume that there exists some element $g_{1} \in \mathcal{NC}$ such that $g$ is in:
\begin{equation*}
    g_{1} \circ A\mathcal{NC}\circ \cdots \circ A^{n}\mathcal{NC} \circ \cdots \circ A\mathcal{NC}.
\end{equation*}
But, once again we notice then that $g$ turns out to be the sum of $g_{1}$ and double exponentially small terms by Taylor expansion. So, by the Phragm\'en--Lindel\"{o}f arguments in Lemma \ref{LemPLSimpCochain}, if the Dulac series for $g_{1}$ is not the identity, it provides a neighbourhood without fixed points, else $g_{1} \equiv \id$ and then $g$ is in:
\begin{equation*}
    A\mathcal{NC}\circ \cdots \circ A^{n}\mathcal{NC} \circ \cdots \circ A\mathcal{NC}.
\end{equation*}
Then $g$ has a fixed-points free neighbourhood if and only if $A^{-1}(g)$ does. Nevertheless, $A^{-1}(g)$ is in:
\begin{equation*}
    \mathcal{NC}\circ \cdots \circ A^{n-1}\mathcal{NC} \circ \cdots \circ \mathcal{NC}.
\end{equation*}
So by induction we are done.
\end{proof}

\bibliographystyle{amsplain}
\bibliography{mybib}

\end{document}